\theoremstyle{plain}
\newtheorem{theorem}{Theorem}[section]
\newtheorem{problem}{Problem}[section]
\newtheorem{remark}{Remark}[section]
\newtheorem{lemma}{Lemma}[section]
\newtheorem{proposition}{Proposition}[section]
\title[Lagrangian translators and Lagrangian self-expanders]
{Classification of Lagrangian translators and Lagrangian self-expanders in $\mathbb{C}^{2}$}
\author [Z. Li and G. Wei]{Zhi Li and Guoxin Wei}
\address{Zhi Li \\  \newline \indent College of Mathematics and Information Science, Henan Normal University,
\newline \indent 453007, Xinxiang, Henan, China.}
\email{lizhihnsd@126.com}
\address{Guoxin Wei \\ \newline \indent School of Mathematical Sciences, South China Normal University,
\newline \indent 510631, Guangzhou,  China.}
\email{weiguoxin@tsinghua.org.cn}
\begin{document}
\maketitle

\begin{abstract}
In this paper, we obtain several classification results of $2$-dimensional complete Lagrangian translators and lagrangian self-expanders with constant squared norm $|\vec{H}|^{2}$ of the mean curvature vector in $\mathbb{C}^{2}$ by using a new Omori-Yau type maximum principle which was proved by Chen and Qiu \cite{CQ}. The same idea is also used to give a similar result of Lagrangian $\xi$-translators in $\mathbb{C}^{2}$.
\end{abstract}

\footnotetext{2020 \textit{Mathematics Subject Classification}:
53E10, 53C40.}
\footnotetext{{\it Key words and phrases}: Mean curvature flow, Lagrangian translator, Lagrangian self-expander, Maximum principle.}

\section{introduction}
\vskip2mm
\noindent

The mean curvature flow in the Euclidean space $\mathbb{R}^{n+p}$ is a one-parameter family
of immersions $x_{t}=x(\cdot, t): M^{n}\to \mathbb{R}^{n+p}$ with the corresponding image
$M_{t}=x_{t}(M^{n})$ such that
\begin{equation*}
\frac{d}{dt}x(p, t)=\vec{H}(p,t), \ \ x(p, 0)=x(p), \ \ p\in M^{n},
\end{equation*}
where $\vec{H}(p,t)$ is the mean curvature vector of $M_{t}$ at $p\in M^{n}$.

An $n$-dimensional smooth immersed submanifold $x: M^{n}\to \mathbb{R}^{n+p}$ is called a translating soliton (or, simply, translator) of the mean curvature flow if its mean curvature vector $\vec{H}$ satisfies the following equation
\begin{equation}\label{1.1-1}
\vec{H}+T^{\perp}=0,
\end{equation}
where $T^{\perp}$ denotes the normal part of nonzero constant vector $T$ in $\mathbb{R}^{n+p}$.
while $x(M^{n})$ is said to be a self-expander of the mean curvature if it holds that
\begin{equation}\label{1.1-2}
\vec{H}=x^{\perp},
\end{equation}
where $x^{\perp}$ is the orthogonal projection of the position vector $x$ in $\mathbb{R}^{n+p}$ to the normal bundle of $M^{n}$.
As is known to all, solutions of \eqref{1.1-1} correspond to translating solutions
 $\{M_{t}=M+tT, \ t\in \mathbb{R}\}$ of the mean
curvature flow, and are important in the singularity theory of the mean curvature flow since
they often occur as Type-II singularities. $M^{n}$ is a self-expander if and only if the family of homothetic hypersurfaces $\{x_{t}=\sqrt{2t}x, \ t>0\}$ is a mean curvature flow. The self-expanders appear as the singularity model of the mean curvature flow which exists for long time. Due to this, these two kinds of solitons are called self-similar solutions to the mean curvature flow of submanifolds in $\mathbb{R}^{n+p}$. They have been extensively studied for years and a number of interesting rigidity theorems and classification theorems have been obtained, including rigidity theorems and classification theorems. To see the details, readers are referred to, for translators, (\cite{AW}, \cite{CSS}, \cite{H1}, \cite{M}, \cite{Pyo}, \cite{Wang}, \cite{White}) and, for self-expanders, (\cite{AC}, \cite{CZ}, \cite{H}, \cite{Ish}, \cite{Smo}) etc.

It is known that Lagrangian submanifolds are a class of important submanifolds in the complex Euclidean space $\mathbb{C}^{n}$. Since the mean curvature flow keep invariant of the Lagrangian property, which
means that if the initial submanifold $x: M^{n}\to \mathbb{R}^{2n}$ is Lagrangian, then the mean curvature flow $x(\cdot, t): M^{n}\to \mathbb{R}^{2n}$
is also Lagrangian. Thus, the Lagrangian self-shrinkers, the Lagrangian translators and the Lagrangian self-expanders seem very interesting to study. To learn more about them, please refer to (\cite{Anc}, \cite{Cas}, \cite{GSSZ}, \cite{IJS}, \cite{JLT}, \cite{LW}, \cite{LW1}, \cite{Nak}, \cite{Nev}, \cite{NT}) and the references therein.

From view points of submanifolds theory, it is also very natural to study rigidity
and classification theorems for the Lagrangian self-shrinkers, the Lagrangian translators and the Lagrangian self-expanders. In this direction, Castro and Lerma (\cite{Cas1})provided several rigidity results for the Clifford torus in the class of compact self-shrinkers for Lagrangian mean curvature flow. In 2017, Li and Wang (\cite{LW3}) prove a rigidity theorem which improves a previous theorem
by Castro and Lerma (\cite{Cas1}). Cheng, Hori and Wei (\cite{CHW}) established an interesting classification theorem for complete Lagrangian self-shrinkers with
constant squared norm of the second fundamental form in $\mathbb{C}^{2}$.
Later, Li et al.(\cite{LLQ}) and the author of the present paper (\cite{LW4}) respectively studied the classification of complete the complete Lagrangian translators and Lagrangian self-expanders in $\mathbb{C}^{2}$.
Recently, under the condition that the squared norm of the second fundamental form being bounded from above, the author of the present paper (\cite{LWW}) studied the rigidity problem for $2$-dimensional complete Lagrangian self-shrinkers with constant squared norm $|\vec{H}|^{2}$ of the mean curvature vector in $\mathbb{C}^{2}$. It is natural to ask the following problems:

\begin{problem}\label{problem 1.1}
To classify $2$-dimensional complete Lagrangian translators or Lagrangian self-expanders with constant squared norm $|\vec{H}|^{2}$ of the mean curvature vector in $\mathbb{C}^{2}$
if the squared norm of the second fundamental form is bounded from above.
\end{problem}

It is our motivation to solve the above problem. In this paper, we solve the Problem \ref{problem 1.1}.

\begin{theorem}\label{theorem 1.1}
Let $x: M^{2}\to \mathbb C^{2}$ be a
$2$-dimensional complete Lagrangian translator with constant squared norm $|\vec{H}|^{2}$ of the mean curvature vector in $\mathbb{C}^{2}$. If the squared norm of the second fundamental form is bounded from above, then $x(M^{2})$ is a plane
$\mathbb {R}^{2}$.
\end{theorem}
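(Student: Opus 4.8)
The plan is to run a Bochner/Simons computation against the drift Laplacian of the translator, using the constancy of $|\vec{H}|^{2}$ to extract algebraic information first, and then invoking the Chen--Qiu Omori--Yau maximum principle on $|A|^{2}$ — which is where the boundedness hypothesis enters — to force $|A|^{2}\equiv 0$. Concretely, I would work in an adapted Lagrangian frame $\{e_{1},e_{2}\}$ with normal frame $\{Je_{1},Je_{2}\}$, so that the cubic form $h_{ijk}=\langle \bar\nabla_{e_{i}}e_{j},Je_{k}\rangle$ is totally symmetric, $\vec{H}=\sum_{k}H_{k}Je_{k}$ with $H_{k}=\sum_{i}h_{iik}$, and $\vec{H}=J\nabla\theta$ for the Lagrangian angle $\theta$. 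Writing $T=T^{\top}+T^{\perp}$ with $T^{\top}=\sum_{k}a_{k}e_{k}$ and differentiating $\bar\nabla T=0$ together with $\vec{H}+T^{\perp}=0$ yields the structure relations
\begin{equation*}
a_{l,i}=-\sum_{k}h_{ilk}H_{k},\qquad H_{k,i}=\sum_{j}a_{j}h_{ijk},
\end{equation*}
equivalently $\nabla^{M}_{e_{i}}T^{\top}=-A_{\vec{H}}(e_{i})$ and $\nabla^{\perp}_{e_{i}}\vec{H}=h(e_{i},T^{\top})$. Since $\sum_{k}H_{k}H_{k,i}=0$, the function $f:=|\vec{H}|^{2}$ has $\nabla f\equiv 0$; as $|T|^{2}$ is constant, $|T^{\top}|^{2}=|T|^{2}-f$ is constant as well. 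Let $\mathcal{L}=\Delta-\langle T^{\top},\nabla\,\cdot\,\rangle$ be the drift Laplacian of the translator (with the sign of the drift fixed so that the Simons-type identities below take their standard form). Because $|A|^{2}$ is bounded, the Gauss equation bounds the curvature of $M^{2}$, so the generalized Omori--Yau maximum principle of Chen--Qiu \cite{CQ} is available for $\mathcal{L}$ on the complete surface $M^{2}$.

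Next I would extract the first reductions from $\mathcal{L}f=0$. The translator Simons identity $\Delta^{\perp}\vec{H}-\nabla^{\perp}_{T^{\top}}\vec{H}=-\sum_{i,j}\langle h_{ij},\vec{H}\rangle h_{ij}$ together with $\mathcal{L}f=0$ gives
\begin{equation*}
0=\mathcal{L}f=2|\nabla^{\perp}\vec{H}|^{2}-2\sum_{i,j}\langle h_{ij},\vec{H}\rangle^{2},
\end{equation*}
i.e. $|\nabla^{\perp}\vec{H}|^{2}=|A_{\vec{H}}|^{2}$, and combined with $\nabla f\equiv 0$ one also reads off $A_{\vec{H}}(T^{\top})=0$ (hence $\langle\vec{H},h(T^{\top},T^{\top})\rangle=0$). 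It is worth noting that, unlike in the one-dimensional case, these relations do \emph{not} force $A_{\vec{H}}=0$, because $|\nabla^{\perp}\vec{H}|^{2}=|h(T^{\top},\cdot)|^{2}$ need not vanish; this is precisely why the bound on $|A|^{2}$ and the maximum principle are needed.

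The core step is a Simons-type identity for $S:=|A|^{2}=\sum h_{ijk}^{2}$. Using the Codazzi equations, the Gauss equation in dimension two, and the structure relations above, I would derive $\frac{1}{2}\mathcal{L}S=|\nabla A|^{2}+\mathcal{Q}$, where $\mathcal{Q}$ is an explicit polynomial in the $h_{ijk}$, $H_{k}$ and $a_{k}$, with the $2$-dimensional structure of the totally symmetric cubic form used to put $\mathcal{Q}$ in a tractable, essentially diagonal form. Applying the Chen--Qiu maximum principle to the bounded function $S$ produces points $p_{m}$ with $S(p_{m})\to\sup_{M}S=:a$, $|\nabla S|(p_{m})\to 0$ and $\limsup_{m}\mathcal{L}S(p_{m})\le 0$; since $f$ and $|T^{\top}|^{2}$ are constant, passing to the limit in $\frac{1}{2}\mathcal{L}S=|\nabla A|^{2}+\mathcal{Q}$ yields $|\nabla A|(p_{m})\to 0$ and a set of algebraic constraints at the ``maximum''. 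Feeding these back into $A_{\vec{H}}(T^{\top})=0$, $|\nabla^{\perp}\vec{H}|^{2}=|A_{\vec{H}}|^{2}$ and the constancy of $f,|T^{\top}|^{2}$ should pin the value down to $a=0$; then $A\equiv 0$, so $M^{2}$ is totally geodesic, hence (being complete and Lagrangian) an affine Lagrangian $2$-plane — necessarily with $T$ tangent to it — which is the plane $\mathbb{R}^{2}$ of the statement. In particular $\vec{H}\equiv 0$, consistent with $f$ being the constant $0$.

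The main obstacle I anticipate is controlling $\mathcal{Q}$: the Simons identity for $|A|^{2}$ produces several terms of indefinite sign, and one must show — using the constancy of both $|\vec{H}|^{2}$ and $|T^{\top}|^{2}$ (the latter a consequence of the former together with $|T|$ being constant), the vanishing of $\nabla A$ in the Omori--Yau limit, and the low dimension — that the limiting inequality $\limsup_{m}\mathcal{Q}(p_{m})\le 0$ actually forces $\sup_{M}|A|^{2}=0$. A secondary technical point is to verify carefully that the hypotheses of the Chen--Qiu maximum principle hold for the drift Laplacian of a translator under the sole assumption that $|A|^{2}$ is bounded.
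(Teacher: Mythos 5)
Your overall framework — drift Laplacian $\mathcal{L}_{-T^{\top}}$, the identities $|\nabla^{\perp}\vec{H}|^{2}=|A_{\vec{H}}|^{2}$ and $A_{\vec{H}}(T^{\top})=0$ coming from $|\vec{H}|^{2}=\mathrm{const}$, a Simons-type formula for $S=|A|^{2}$, and the Chen--Qiu maximum principle (whose hypothesis is checked via the Bakry--Emery Ricci tensor, which is bounded below exactly because $S$ is bounded) — is the same as the paper's. But there are two genuine gaps. First, you apply the maximum principle to $S$ at its \emph{supremum}, and with the actual reaction term this does not close: the translator Simons identity reads
\begin{equation*}
\tfrac{1}{2}\mathcal{L}_{-T^{\top}}S=\sum_{i,j,k,p}(h^{p^{\ast}}_{ijk})^{2}-\sum_{i,p}(H^{p^{\ast}}_{,i})^{2}-\tfrac{1}{2}(H^{2}-S)(H^{2}-3S),
\end{equation*}
so at a sup-sequence (even granting that the first two terms cancel in the limit) you only obtain $(H^{2}-\sup S)(H^{2}-3\sup S)\ge 0$, which is automatically satisfied whenever $H^{2}\le\sup S$ and therefore forces nothing. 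The paper instead applies the principle to $-S$, i.e.\ works at $\inf S$: if $\inf S=0$ then the pointwise inequality $H^{2}\le\frac{4}{3}S$ (a consequence of the $2$-dimensional Lagrangian symmetries in the frame with $H^{2^{\ast}}=0$) forces $|\vec{H}|\equiv 0$, and if $\inf S>0$ the resulting inequality $(H^{2}-\inf S)(H^{2}-3\inf S)\le 0$ contradicts the algebraic constraints derived at the limit points. Note also that the intermediate target is $|\vec{H}|\equiv 0$, not $S\equiv 0$; the plane is then obtained from $T^{\perp}=0$ plus minimality and completeness, not from total geodesy.

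Second, the step you flag as ``the main obstacle'' is in fact the entire substance of the proof, and you give no mechanism for it. One must show that along the minimizing sequence $\sum(h^{p^{\ast}}_{ijk})^{2}-\sum(H^{p^{\ast}}_{,i})^{2}\to 0$, and this requires the detailed component analysis: first proving that $\bar h^{2^{\ast}}_{11}=0$ at the limit (by contradiction, splitting on whether $\bar h^{1^{\ast}}_{11}\bar h^{1^{\ast}}_{22}-(\bar h^{2^{\ast}}_{11})^{2}$ vanishes and solving the linear system coming from $\nabla S\to 0$ for $h^{1^{\ast}}_{111},h^{2^{\ast}}_{111}$ in terms of $h^{2^{\ast}}_{222}$), then splitting on whether $\langle T,e_{1}\rangle\to 0$, and in the surviving case deriving $\bar S=3H^{2}$ and $\lim h^{1^{\ast}}_{11k}=0$ before reaching the sign contradiction. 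None of this is replaced by your appeal to ``the low dimension'' or to the constancy of $|T^{\top}|^{2}$, so as written the proposal does not constitute a proof.
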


Assume that $T$ is a nonzero constant in $\mathbb{R}^{n+p}$, then $M^{n}$ is called a $\xi$-translator if $\xi=\vec{H}+T^{\perp}$
is parallel in the normal bundle. It is easy to see that when $\xi=0$, $M^{n}$ is a translating soliton (translator). Thus, by an application of Theorem \ref{theorem 1.1} and a theorem
of Hoffman (Theorem 4.1, \cite{Hoff}), we can easily obtain the following more general result.

\begin{theorem}\label{theorem 1.2}
Let $x: M^{2}\to \mathbb C^{2}$ be a $2$-dimensional complete Lagrangian $\xi$-translator with constant squared norm $|\vec{H}|^{2}$ of the mean curvature vector in $\mathbb{C}^{2}$. If the squared norm of the second fundamental form is bounded from above, then $x(M^{2})$ is a plane $\mathbb {R}^{2}$ or the circular cylinder $\mathbb{S}^{1}(r)\times \mathbb{R}^{1}$ for some $r>0$.
\end{theorem}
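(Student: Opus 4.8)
The plan is to dichotomize on whether the parallel normal field $\xi := \vec H + T^{\perp}$ vanishes, applying Theorem \ref{theorem 1.1} in the trivial case and reducing the nontrivial case to a flat surface, where Hoffman's classification applies. First, since $\xi$ is parallel in the normal bundle, $X|\xi|^{2} = 2\langle \nabla^{\perp}_{X}\xi,\xi\rangle = 0$ for every tangent vector $X$, so $|\xi|^{2}$ is constant on the connected surface $M^{2}$; hence either $\xi \equiv 0$ or $\xi$ is nowhere zero. In the first case $\vec H + T^{\perp}=0$, i.e. $x$ is a Lagrangian translator with constant $|\vec H|^{2}$ and bounded second fundamental form, so Theorem \ref{theorem 1.1} yields at once that $x(M^{2})$ is the plane $\mathbb{R}^{2}$.

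So assume $\xi$ is nowhere zero. The next step is to turn the parallelism of $\xi$ into an intrinsic condition. Because $x$ is Lagrangian, the complex structure $J$ maps $TM$ isomorphically onto the normal bundle, so $\xi = J\eta$ for a uniquely determined nowhere-zero tangent vector field $\eta$ (namely $\eta = -J\xi$). Using that $J$ is parallel for the Euclidean connection $\widetilde{\nabla}$ of $\mathbb{C}^{2}$ and splitting $\widetilde{\nabla}_{X}(J\eta) = J\widetilde{\nabla}_{X}\eta$ into tangential and normal parts gives $\nabla^{\perp}_{X}(J\eta) = J(\nabla_{X}\eta)$; since $J$ is injective, $\nabla^{\perp}\xi = 0$ is equivalent to $\nabla\eta = 0$. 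Thus $M^{2}$ carries a nowhere-zero parallel vector field, which forces the Gaussian curvature to vanish: in dimension two $R(X,\eta)\eta = K(|\eta|^{2}X - \langle X,\eta\rangle\eta)$, and choosing $X$ orthogonal to $\eta$ (possible since $\eta\neq0$) gives $K|\eta|^{2}X=0$, whence $K\equiv0$.

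We have thus arrived at a complete flat Lagrangian immersion $x : M^{2}\to\mathbb{C}^{2}=\mathbb{R}^{4}$ carrying the nontrivial parallel normal field $\xi$, with $|\vec H|^{2}$ constant, with bounded second fundamental form, and satisfying $\vec H = \xi - T^{\perp}$. At this point I would invoke Theorem 4.1 of Hoffman \cite{Hoff} on complete surfaces carrying a parallel normal vector field; combined with the constancy of $|\vec H|^{2}$ and the $\xi$-translator equation, it identifies $x(M^{2})$ as the circular cylinder $\mathbb{S}^{1}(r)\times\mathbb{R}^{1}$ for some $r>0$, the degenerate sub-case (shape operator $A_{\xi}$ vanishing, so that $\xi$ is a constant ambient vector and $M^{2}$ lies in an affine $3$-plane) producing the plane $\mathbb{R}^{2}$. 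Together with the first case, this gives the asserted dichotomy.

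The only step that is more than bookkeeping is the last one. The delicate point is to match the data we have produced — completeness, flatness, the Lagrangian condition, constancy of $|\vec H|^{2}$, boundedness of $|A|^{2}$, and the prescribed parallel normal field solving the $\xi$-translator equation — to the precise hypotheses under which Hoffman's theorem forces a generalized cylinder, and in particular to exclude the compact flat (torus) case, which is not ruled out by flatness alone and must instead be excluded using the translating structure.
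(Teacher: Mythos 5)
Your proposal is correct and follows essentially the same route as the paper: dichotomize on $\xi$, invoke Theorem \ref{theorem 1.1} when $\xi\equiv 0$, use the Lagrangian identification $J$ between the tangent and normal bundles to deduce $K\equiv 0$ when $\xi$ is nowhere zero, and then appeal to Hoffman's classification together with the constancy of $|\vec{H}|^{2}$. The only (cosmetic) differences are that you obtain flatness from the parallel tangent field $-J\xi$ rather than from flatness of the normal bundle, and you flag---at the same level of detail as the paper, which handles it only via the unproved Remark excluding spheres and Clifford tori---that the compact cases in Hoffman's list must be ruled out using the translating structure.
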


\begin{remark}
Note that any sphere $\mathbb{S}^{2}(r)$, $r>0$ or any Clifford torus $\mathbb{S}^{1}(r_{1})\times \mathbb{S}^{1}(r_{2})$,
$r_{1}, \ r_{2}>0$, can not be an immersed $\xi$-translator.
\end{remark}

By using a similar proof method of the Theorem \ref{theorem 1.1}, we also
obtain
\begin{theorem}\label{theorem 1.3}
Let $x: M^{2}\to \mathbb C^{2}$ be a
$2$-dimensional complete Lagrangian self-expander with constant squared norm of the mean curvature vector in $\mathbb{C}^{2}$. If the squared norm of the second fundamental form is bounded from above, then $x(M^{2})$ is a plane $\mathbb {R}^{2}$ through the origin.
\end{theorem}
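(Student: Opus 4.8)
The plan is to mimic the proof of Theorem~\ref{theorem 1.1}, replacing the translating vector $T$ by the position vector $x$ and, correspondingly, the drift Laplacian $\Delta+\langle T^{\top},\nabla\cdot\rangle$ by $\mathcal{L}:=\Delta+\langle x^{\top},\nabla\cdot\rangle$, the weighted Laplacian adapted (with weight $e^{|x|^{2}/2}$) to self-expanders. First I would record the Lagrangian structure equations: for a Lagrangian surface $x:M^{2}\to\mathbb{C}^{2}$ with complex structure $J$, the second fundamental form is the totally symmetric cubic form $h_{ijk}=\langle A(e_{i},e_{j}),Je_{k}\rangle$, the mean curvature vector corresponds under $J$ to $\nabla\theta$ with $\theta$ the Lagrangian angle, so $|\vec{H}|^{2}=|\nabla\theta|^{2}$, and a direct computation with the Gauss and Weingarten formulas turns the self-expander equation $\vec{H}=x^{\perp}$ into $\nabla_{i}\theta=\langle x,Je_{i}\rangle$ together with the \emph{linear} drift equation $\mathcal{L}\theta=0$. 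Along the way I would also note $\mathcal{L}|x|^{2}=2|x|^{2}+4$, $\nabla^{2}\bigl(\tfrac12|x|^{2}\bigr)=g+\langle\vec{H},A\rangle$ (where $\langle\vec{H},A\rangle_{ij}=\langle\vec{H},A(e_{i},e_{j})\rangle$), and the Gauss equation $2K=|\vec{H}|^{2}-|A|^{2}$.

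Next I would derive two identities from the hypothesis that $|\vec{H}|^{2}\equiv c$ is constant. Since $\mathcal{L}\theta=0$ and $|\nabla\theta|^{2}=c$, the functions $\cos\theta$ and $\sin\theta$ are \emph{bounded} and satisfy $\mathcal{L}(\cos\theta)=-c\cos\theta$, $\mathcal{L}(\sin\theta)=-c\sin\theta$. A weighted Bochner formula for $|\nabla\theta|^{2}$, combined with the two Hessian identities above and the Gauss equation, yields the pointwise identity $|\nabla\vec{H}|^{2}=c+|\langle\vec{H},A\rangle|^{2}$; in particular $|\nabla A|^{2}\ge\tfrac12|\nabla\vec{H}|^{2}\ge\tfrac{c}{2}$. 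Finally, a weighted Simons-type computation gives $\tfrac12\mathcal{L}|A|^{2}=|\nabla A|^{2}-|A|^{2}+\mathcal{R}$, where, after eliminating $K$ through the Gauss equation, the remainder $\mathcal{R}$ is an explicit quartic expression in the components of $A$ and in $c$.

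Then I would invoke the maximum principle. Because $|A|$ is bounded, the generalized Omori--Yau maximum principle of Chen and Qiu~\cite{CQ} applies to $\mathcal{L}$ on $M^{2}$. Applied to $\cos\theta$ (and $\sin\theta$) it forces, via $|\nabla\cos\theta|^{2}=c(1-\cos^{2}\theta)\to0$ at near-extremal points, that $\sup\cos\theta=1$ and $\inf\cos\theta=-1$ when $c>0$; this pins down the behaviour of $h_{ijk}$ at the points where $\theta$ is near a multiple of $\pi$. Applied to $|A|^{2}$ and fed with the Simons formula and the lower bound for $|\nabla A|^{2}$, it produces an algebraic inequality relating $\sup|A|^{2}$ and $c$. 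Combining these with the Gauss equation, I expect the only consistent possibility to be $c=0$, i.e.\ $\vec{H}\equiv0$. Once $\vec{H}\equiv0$, the self-expander equation gives $x^{\perp}\equiv0$, so $x(M)$ is a cone with vertex at the origin and $|\nabla|x||\equiv1$; by completeness the origin lies in $x(M)$, near which $x(M)$ is a smooth surface, and a cone smooth at its vertex is a plane, whence $x(M)=\mathbb{R}^{2}$ through the origin.

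The hard part is the penultimate step: extracting the rigidity $c=0$ from the maximum-principle inequalities. The Simons remainder $\mathcal{R}$ is genuinely sign-indefinite, so one cannot simply read off a contradiction from $\tfrac12\mathcal{L}|A|^{2}\le0$ at the supremum of $|A|^{2}$; one must use the Gauss equation to trade the intrinsic curvature for $|A|^{2}$ and $c$, and must feed in the pointwise information coming from the bounded eigenfunctions $\cos\theta,\sin\theta$. Carefully balancing these estimates, rather than the largely formal reduction to $\mathcal{L}\theta=0$, is where the real work lies.
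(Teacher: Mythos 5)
Your reduction to the drift operator $\mathcal{L}_{x^{\top}}=\Delta+\langle x^{\top},\nabla\cdot\rangle$, the identity $|\nabla^{\perp}\vec H|^{2}=|\vec H|^{2}+|\langle\vec H,A\rangle|^{2}$ (this is \eqref{3.1-17}) and the weighted Simons formula \eqref{2.1-16} are all correct, and your endgame once $\vec H\equiv0$ is sound (the paper instead shows $h^{p^{\ast}}_{ij}\equiv0$ directly from $\sum_{k}h^{p^{\ast}}_{ik}\langle x,e_{k}\rangle=0$ and $\langle x,e_{k}\rangle_{,k}=1$, but the minimal-cone argument also works). The genuine gap is the central rigidity step: you never prove $c=|\vec H|^{2}=0$, and the strategy you sketch for it does not close. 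First, applying the Chen--Qiu principle to $|A|^{2}$ at its \emph{supremum} gives, via \eqref{3.1-18}, only $\limsup\bigl(|\nabla A|^{2}-|\nabla^{\perp}\vec H|^{2}\bigr)\le\tfrac12(c-\sup S)(c-3\sup S-2)$; this right-hand side is positive whenever $\sup S>c$, and even when it is negative the crude estimate $|\nabla A|^{2}\ge\tfrac12|\nabla^{\perp}\vec H|^{2}$ merely produces an inequality between $c$ and $\sup S$ that admits nonzero solutions, so no contradiction results. Second, the bounded eigenfunctions $\cos\theta,\sin\theta$ (with $\mathcal{L}_{x^{\top}}\cos\theta=-c\cos\theta$) only tell you that $\theta$ accumulates at multiples of $\pi$ along some sequences; since $\vec H$ corresponds to $\nabla\theta$ and not to $\theta$, this places no constraint on $h^{p^{\ast}}_{ijk}$ or on $|\nabla A|^{2}-|\nabla^{\perp}\vec H|^{2}$ at those points, so it cannot feed the Simons inequality in the way you hope.

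What actually makes the argument work (Proposition \ref{proposition 3.2}) is the opposite choice of test function: Lemma \ref{lemma 2.1} is applied to $-S$, i.e.\ along a sequence realizing $\inf S$, because in the adapted frame $\vec H=He_{1^{\ast}}$ one has $H^{2}\le\tfrac43S$ pointwise, so $\inf S=0$ immediately forces $c=0$. Ruling out $\inf S>0$ then requires a componentwise analysis that your outline neither performs nor replaces: one extracts limits $\bar h^{p^{\ast}}_{ij}$ of all second-fundamental-form components along the minimizing sequence, uses $\nabla H^{2}=0$, $\nabla S\to0$ and the first-order relations \eqref{3.1-20}--\eqref{3.1-24} to solve for the third-order quantities $h^{p^{\ast}}_{ijk}$, and checks case by case ($\bar h^{2^{\ast}}_{11}\ne0$; then $\bar h^{1^{\ast}}_{11}\bar h^{1^{\ast}}_{22}\ne0$, $\bar h^{1^{\ast}}_{11}=0$, $\bar h^{1^{\ast}}_{22}=0$) that the resulting value of $|\nabla A|^{2}-|\nabla^{\perp}\vec H|^{2}$ is incompatible with $\lim\mathcal{L}_{x^{\top}}S\ge0$ and with \eqref{3.1-22}. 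This bookkeeping is precisely the step you flag as ``where the real work lies,'' and without it the proof is incomplete.
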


\vskip5mm
\section {Preliminaries}
\vskip2mm

\noindent

Let $x: M^{2} \rightarrow\mathbb C^{2}$ be an
$2$-dimensional Lagrangian surface of $\mathbb C^{2}$. Denote
by $J$ the canonical complex structure on $\mathbb C^{2}$.
We choose orthonormal tangent vector fields $\{e_{1}, e_{2}\}$ and $\{e_{1^{\ast}}, e_{2^{\ast}}\}$ are normal vector fields given by
$$e_{1^{\ast}}=J e_{1}, \ e_{2^{\ast}}=Je_{2}.$$
Then
$$\{e_{1}, e_{2}, e_{1^{\ast}}, e_{2^{\ast}}\}$$
is called an adapted Lagrangian frame field. The dual frame fields of $\{e_{1}, e_{2}\}$ are $\{\omega_{1}, \omega_{2}\}$, the Levi-Civita connection forms and normal
connection forms are $\omega_{ij}$ and $\omega_{i^{\ast}j^{\ast}}$ , respectively.

Since $x: M^{2} \rightarrow\mathbb C^{2}$ is a Lagrangian surface (see \cite{LV}, \cite{LW2}), we have
\begin{equation}\label{2.1-1}
h^{p^{\ast}}_{ij}=h^{p^{\ast}}_{ji}=h^{i^{\ast}}_{pj}, \ \ i,j,p=1,2.
\end{equation}
The second fundamental form $h$ and the mean curvature $\vec{H}$ of $x$ are respectively
defined by $$h=\sum_{ijp}h^{p^{\ast}}_{ij}\omega_{i}\otimes\omega_{j}\otimes e_{p^{\ast}},\ \ \vec{H}=\sum_{p}H^{p^{\ast}}e_{p^{\ast}}=\sum_{i,p}h^{p^{\ast}}_{ii}e_{p^{\ast}}.$$
Let $S=\sum_{i,j,p}(h^{p^{\ast}}_{ij})^{2}$ be the squared norm of the second
fundamental form and $H=|\vec{H}|$ denote the mean curvature of $x$. If we denote the components of curvature tensors of the Levi-Civita connection forms $\omega_{ij}$ and normal connection forms $\omega_{i^{\ast}j^{\ast}}$ by
$R_{ijkl}$ and $R_{i^{\ast}j^{\ast}kl}$, respectively, then the equations of Gauss, Codazzi and Ricci are given by
\begin{equation}\label{2.1-2}
R_{ijkl}=\sum_{p}(h^{p^{\ast}}_{ik}h^{p^{\ast}}_{jl}-h^{p^{\ast}}_{il}h^{p^{\ast}}_{jk}),
\end{equation}
\begin{equation}\label{2.1-3}
R_{ik}=\sum_{p}H^{p^{\ast}}h^{p^{\ast}}_{ik}-\sum_{j,p}h^{p^{\ast}}_{ij}h^{p^{\ast}}_{jk},
\end{equation}
\begin{equation}\label{2.1-4}
h^{p^{\ast}}_{ijk}=h^{p^{\ast}}_{ikj},
\end{equation}
\begin{equation}\label{2.1-5}
R_{p^{\ast}q^{\ast}kl}=\sum_{i}(h^{p^{\ast}}_{ik}h^{q^{\ast}}_{il}
-h^{p^{\ast}}_{il}h^{p^{\ast}}_{ik}),
\end{equation}
\begin{equation}\label{2.1-6}
R=H^{2}-S.
\end{equation}

From \eqref{2.1-1} and \eqref{2.1-4}, we easily know that the components $h^{p^{\ast}}_{ijk}$ is totally symmetric for $i, j, k,l$. In particular,
\begin{equation}\label{2.1-7}
h^{p^{\ast}}_{ijk}=h^{p^{\ast}}_{kji}=h^{i^{\ast}}_{pjk}, \ \ i, j, k ,p=1, 2.
\end{equation}
By making use of \eqref{2.1-1}, \eqref{2.1-2} and \eqref{2.1-5}, we obtain
\begin{equation}\label{2.1-8}
R_{ijkl}=K(\delta_{ik}\delta_{jl}-\delta_{il}\delta_{jk})=R_{i^{\ast}j^{\ast}kl}, \ \ K=\frac{1}{2}(H^{2}-S),
\end{equation}
where $K$ is the Gaussian curvature of $x$.

\noindent By defining
\begin{equation*}
\sum_{l}h^{p^{\ast}}_{ijkl}\omega_{l}=dh^{p^{\ast}}_{ijk}+\sum_{l}h^{p^{\ast}}_{ljk}\omega_{li}
+\sum_{l}h^{p^{\ast}}_{ilk}\omega_{lj}+\sum_{l} h^{p^{\ast}}_{ijl}\omega_{lk}+\sum_{q} h^{q^{\ast}}_{ijk}\omega_{q^{\ast}p^{\ast}},
\end{equation*}
we have the following Ricci identities
\begin{equation}\label{2.1-9}
h^{p^{\ast}}_{ijkl}-h^{p^{\ast}}_{ijlk}=\sum_{m}
h^{p^{\ast}}_{mj}R_{mikl}+\sum_{m} h^{p^{\ast}}_{im}R_{mjkl}+\sum_{m} h^{m^{\ast}}_{ij}R_{m^{\ast}p^{\ast}kl}.
\end{equation}

Let $V$ be a tangent $C^{1}$-vector field on $M^{n}$ and denote by $Ric_{V} := Ric-\frac{1}{2}L_{V}g$ the
Bakry-Emery Ricci tensor with $L_{V}$ to be the Lie derivative along the vector field $V$. Define a
differential operator
\begin{equation*}
\mathcal{L}_{V}f=\Delta f+\langle V,\nabla f\rangle,
\end{equation*}
where $\Delta$ and $\nabla$ denote the Laplacian and the gradient
operator, respectively. The following maximum principle
of Omori-Yau type concerning the operator $\mathcal{L}$ will
be used in this paper, which was proved by Chen and Qiu \cite{CQ}.

\begin{lemma}\label{lemma 2.1}
Let $(M^{n}, g)$ be a complete Riemannian manifold, and $V$ is a $C^{1}$ vector field on $M^{n}$. If the Bakry-Emery Ricci tensor $Ric_{V}$ is bounded from below, then for any $f\in C^{2}(M^{n})$ bounded from above, there exists a sequence $\{p_{t}\} \subset M^{n}$, such that
\begin{equation*}
\lim_{m\rightarrow\infty} f(p_{t})=\sup f,\quad
\lim_{m\rightarrow\infty} |\nabla f|(p_{t})=0,\quad
\lim_{m\rightarrow\infty}\mathcal{L}_{V}f(p_{t})\leq 0.
\end{equation*}
\end{lemma}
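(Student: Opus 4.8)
The plan is to adapt the classical Omori--Yau perturbation scheme to the drift operator $\mathcal{L}_V$, the entire difficulty being concentrated in producing a suitable barrier. Concretely, I would first construct a function $\varphi\in C^{2}(M^{n})$ with $\varphi\ge 0$, $\varphi(x)\to+\infty$ as $x\to\infty$, $|\nabla\varphi|$ bounded on $M^{n}$, and $\mathcal{L}_V\varphi$ bounded from above on $M^{n}$ (in the barrier sense along the cut locus). Granting such a $\varphi$, the proof finishes as follows: given $f\in C^{2}(M^{n})$ with $f^{\ast}:=\sup f<\infty$, for each $\varepsilon>0$ the function $h_{\varepsilon}:=f-\varepsilon\varphi$ tends to $-\infty$ at infinity and hence attains a maximum at some $x_{\varepsilon}\in M^{n}$; there $\nabla f(x_{\varepsilon})=\varepsilon\nabla\varphi(x_{\varepsilon})$ and $\mathcal{L}_V h_{\varepsilon}(x_{\varepsilon})\le 0$, so $|\nabla f|(x_{\varepsilon})\le\varepsilon\sup|\nabla\varphi|$ and $\mathcal{L}_V f(x_{\varepsilon})\le\varepsilon\sup\mathcal{L}_V\varphi$. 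Choosing, for each large $k$, a point $y_{k}$ with $f(y_{k})>f^{\ast}-1/k$ and then $\varepsilon_{k}\in(0,1/k)$ so small that $h_{\varepsilon_{k}}(y_{k})>f^{\ast}-1/k$, one gets $f(x_{\varepsilon_{k}})\ge h_{\varepsilon_{k}}(x_{\varepsilon_{k}})\ge h_{\varepsilon_{k}}(y_{k})>f^{\ast}-1/k$, and the sequence $p_{t}:=x_{\varepsilon_{t}}$ has all three required properties.

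For the construction of $\varphi$, I would fix $o\in M^{n}$, set $r=d(o,\cdot)$, write $Ric_V\ge-(n-1)K$ with $K\ge 0$, and estimate $\mathcal{L}_V r$ along a unit-speed minimizing geodesic $\gamma$ issuing from $o$. Writing $m(s)=\Delta r(\gamma(s))$ and $w(s)=\langle V(\gamma(s)),\gamma'(s)\rangle$, so that $(\mathcal{L}_V r)(\gamma(s))=m(s)+w(s)$, the radial Riccati inequality for the shape operators of geodesic spheres gives $m'\le-Ric(\gamma',\gamma')-m^{2}/(n-1)$ by Cauchy--Schwarz, while $w'=\langle\nabla_{\gamma'}V,\gamma'\rangle=\tfrac12(L_Vg)(\gamma',\gamma')$ because $\gamma$ is a geodesic; adding these,
\begin{equation*}
(\mathcal{L}_V r)'(\gamma(s))\ \le\ -Ric_V(\gamma',\gamma')-\frac{m(s)^{2}}{n-1}\ \le\ (n-1)K-\frac{m(s)^{2}}{n-1}.
\end{equation*}
The idea is then to integrate from a small $s_{0}>0$ to $\rho=r(x)$, using the retained term $-m^{2}/(n-1)$ to absorb the singularity at $s=0$: from the standard expansion $m(s)=\tfrac{n-1}{s}+O(s)$ near $o$ one has, for $s_{0}<\sigma$ with $\sigma$ a small fixed radius, both $m(s_{0})\le\tfrac{n-1}{s_{0}}+C_{0}\sigma$ and $\tfrac1{n-1}\int_{s_{0}}^{\rho}m^{2}\,ds\ge(n-1)\big(\tfrac1{s_{0}}-\tfrac1\sigma\big)-2C_{0}\sigma$, so the divergent $(n-1)/s_{0}$ contributions cancel and one is left with
\begin{equation*}
\mathcal{L}_V r(x)\ \le\ C_{1}+(n-1)K\,r(x),\qquad C_{1}=C_{1}(M,g,V,o),
\end{equation*}
first for $x$ outside the cut locus of $o$ with $r(x)>\sigma$, and then everywhere in the barrier sense via Calabi's trick. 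Taking $\varphi:=\log(1+r^{2})$ one computes $|\nabla\varphi|=2r/(1+r^{2})\le 1$ and
\begin{equation*}
\mathcal{L}_V\varphi=\frac{2(1-r^{2})}{(1+r^{2})^{2}}+\frac{2r}{1+r^{2}}\,\mathcal{L}_V r\ \le\ 2+C_{1}+2(n-1)K,
\end{equation*}
so $\varphi$ has all the required properties.

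I expect the genuine obstacle to be precisely this comparison estimate for $\mathcal{L}_V r$. In the unweighted Omori--Yau theorem one controls $\Delta r$ by itself; here neither $\Delta r$ nor $\langle V,\nabla r\rangle$ is individually controlled, since a lower bound on $Ric_V=Ric-\tfrac12 L_Vg$ says nothing about $Ric$ or $L_Vg$ separately and $|V|$ is not assumed bounded, so one is forced to estimate the combination $\mathcal{L}_V r$ directly. The mechanism that has to be made to work — and which I would need to get exactly right — is the Cauchy--Schwarz gain $m^{2}/(n-1)\sim(n-1)/s^{2}$ near $o$, whose integral $\sim(n-1)/s_{0}$ is just large enough to defeat the $(n-1)/s_{0}$ blow-up of $\mathcal{L}_V r$ as $s_{0}\downarrow 0$; once this is secured, the cut-locus points are handled by the usual Calabi argument and the remaining steps are routine.
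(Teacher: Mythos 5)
The paper does not prove this lemma at all: it is quoted verbatim from Chen--Qiu \cite{CQ}, so there is no in-paper argument to compare against. Your reconstruction is correct and is essentially the standard proof of the weighted Omori--Yau principle (and, as far as the key mechanism goes, the same one underlying Chen--Qiu's result). The one step that genuinely matters --- the comparison $\mathcal{L}_{V}r\le C_{1}+(n-1)Kr$ --- you derive correctly: since $Ric_{V}=Ric-\tfrac12L_{V}g$ controls neither $\Delta r$ nor $\langle V,\nabla r\rangle$ separately, one must estimate the sum, and your identity $w'=\tfrac12(L_Vg)(\gamma',\gamma')$ along a geodesic combined with the Riccati inequality $m'\le -Ric(\gamma',\gamma')-m^{2}/(n-1)$ is exactly the right combination; the cancellation of the $(n-1)/s_{0}$ singularity against $\tfrac{1}{n-1}\int_{s_0}^{\sigma}m^{2}$ checks out, and the boundary term $w(s_{0})$ stays bounded because $V$ is continuous on the compact ball $\bar B(o,\sigma)$. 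The remaining steps (properness of $f-\varepsilon\varphi$ with $\varphi=\log(1+r^{2})$, the first- and second-order conditions at the maximum, the diagonal choice of $\varepsilon_{k}$, and Calabi's barrier both for extending the comparison past the cut locus and at a maximum point lying in the cut locus) are all standard and correctly invoked. Two cosmetic remarks: the third conclusion should really be a $\limsup$ (this is a defect of the statement as quoted, not of your argument), and you should make explicit that when $x_{\varepsilon}$ is a cut point the gradient identity $\nabla f(x_{\varepsilon})=\varepsilon\nabla\varphi_{\delta}(x_{\varepsilon})$ is read through the smooth upper barrier $\varphi_{\delta}$, whose gradient is still bounded by $1$.
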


For the mean curvature vector field $\vec{H}=\sum_{p}H^{p^{\ast}}e_{p^{\ast}}$, we define
\begin{equation}\label{2.1-10}
|\nabla^{\perp}\vec{H}|^{2}=\sum_{i,p}(H^{p^{\ast}}_{,i})^{2}, \ \ \Delta^{\perp}H^{p^{\ast}}=\sum_{i}H^{p^{\ast}}_{,ii}.
\end{equation}
By the definition \eqref{1.1-1} and \eqref{1.1-2} of the Lagrangian translator and lagrangian self-expander respectively, it is sufficient
to give several basic differential formulas.
\begin{equation}\label{2.1-11}
H^{p^{\ast}}_{,i}
=\sum_{k}h^{p^{\ast}}_{ik}\langle T, e_{k}\rangle, \ \
H^{p^{\ast}}_{,ij}
=\sum_{k}h^{p^{\ast}}_{ijk}\langle T, e_{k}\rangle-\sum_{k,q}h^{p^{\ast}}_{ik}h^{q^{\ast}}_{kj}H^{q^{\ast}}
\end{equation}
and
\begin{equation}\label{2.1-12}
H^{p^{\ast}}_{,i}
=-\sum_{k}h^{p^{\ast}}_{ik}\langle x, e_{k}\rangle, \ \
H^{p^{\ast}}_{,ij}
=-\sum_{k}h^{p^{\ast}}_{ijk}\langle x, e_{k}\rangle-h^{p^{\ast}}_{ij}-\sum_{k,q}h^{p^{\ast}}_{ik}h^{q^{\ast}}_{kj}H^{q^{\ast}}.
\end{equation}

\noindent
If we choose $V=-T^{\top}$ and $x^{\top}$ respectively, using the above formulas and the Ricci identities, we can get the following Lemmas (see \cite{LLQ} and \cite{LW4}).
\begin{lemma}\label{lemma 2.2}
Let $x:M^{2}\rightarrow \mathbb{C}^{2}$ be an $2$-dimensional complete lagrangian translator. We have
\begin{equation}\label{2.1-13}
\frac{1}{2}\mathcal{L}_{-T^{\top}} H^{2}=\sum_{i,p}(H^{p^{\ast}}_{,i})^{2}-\sum_{i,j,p,q}H^{p^{\ast}}h^{p^{\ast}}_{ij}H^{q^{\ast}}h^{q^{\ast}}_{ij}
\end{equation}
and
\begin{equation}\label{2.1-14}
\frac{1}{2}\mathcal{L}_{-T^{\top}}S
=\sum_{i,j,k}(h^{p^{\ast}}_{ijk})^{2}-\frac{1}{2}(H^{2}-S)(H^{2}-3S)
-\sum_{i,j,p,q}H^{p^{\ast}}h^{p^{\ast}}_{ij}H^{q^{\ast}}h^{q^{\ast}}_{ij}.
\end{equation}
\end{lemma}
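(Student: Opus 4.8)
The plan is to obtain both identities by differentiating the scalars $H^{2}=\sum_{p}(H^{p^{\ast}})^{2}$ and $S=\sum_{i,j,p}(h^{p^{\ast}}_{ij})^{2}$ directly, feeding in the basic formulas \eqref{2.1-11}, the Ricci identities \eqref{2.1-9}, the Codazzi symmetry \eqref{2.1-7}, and the curvature expression \eqref{2.1-8}. For \eqref{2.1-13} one computes
\[
\tfrac12\Delta H^{2}=\sum_{i,p}(H^{p^{\ast}}_{,i})^{2}+\sum_{i,p}H^{p^{\ast}}H^{p^{\ast}}_{,ii}.
\]
Contracting the second identity in \eqref{2.1-11} and using that the trace commutes with covariant differentiation (so $\sum_i h^{p^{\ast}}_{iik}=H^{p^{\ast}}_{,k}$) gives $\Delta^{\perp}H^{p^{\ast}}=\sum_{k}H^{p^{\ast}}_{,k}\langle T,e_{k}\rangle-\sum_{i,k,q}h^{p^{\ast}}_{ik}h^{q^{\ast}}_{ki}H^{q^{\ast}}$. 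Since $\langle T^{\top},\nabla H^{2}\rangle=2\sum_{k,p}\langle T,e_{k}\rangle H^{p^{\ast}}H^{p^{\ast}}_{,k}$, the $\langle T,e_{k}\rangle$-terms cancel in $\tfrac12\mathcal L_{-T^{\top}}H^{2}=\tfrac12\Delta H^{2}-\tfrac12\langle T^{\top},\nabla H^{2}\rangle$, leaving exactly \eqref{2.1-13}.

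For \eqref{2.1-14} I would start from
\[
\tfrac12\Delta S=\sum_{i,j,k,p}(h^{p^{\ast}}_{ijk})^{2}+\sum_{i,j,p}h^{p^{\ast}}_{ij}\,\Delta h^{p^{\ast}}_{ij},
\]
and establish a Simons-type identity for $\Delta h^{p^{\ast}}_{ij}=\sum_{k}h^{p^{\ast}}_{ijkk}$. Using \eqref{2.1-4}, \eqref{2.1-7} to rewrite $\sum_{k}h^{p^{\ast}}_{ijkk}=\sum_{k}h^{p^{\ast}}_{ikjk}$, commuting the last two covariant derivatives via \eqref{2.1-9}, and then using $\sum_{k}h^{p^{\ast}}_{ikkj}=H^{p^{\ast}}_{,ij}$ (again because $\sum_k h^{p^{\ast}}_{ikk}=H^{p^{\ast}}_{,i}$), one arrives at
\[
\Delta h^{p^{\ast}}_{ij}=H^{p^{\ast}}_{,ij}+\sum_{k,m}h^{p^{\ast}}_{mk}R_{mijk}+\sum_{k,m}h^{p^{\ast}}_{im}R_{mkjk}+\sum_{k,m}h^{m^{\ast}}_{ik}R_{m^{\ast}p^{\ast}jk}.
\]
Inserting \eqref{2.1-8} (with $K=\tfrac12(H^{2}-S)$) and contracting against $h^{p^{\ast}}_{ij}$, the three curvature sums collapse to $K(3S-2H^{2})$; here the Lagrangian full symmetry \eqref{2.1-7} is what makes the normal-curvature term evaluate cleanly. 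Then substitute the second formula of \eqref{2.1-11} for $H^{p^{\ast}}_{,ij}$: its $\langle T,e_{k}\rangle$-part contributes $\tfrac12\langle T^{\top},\nabla S\rangle$, which cancels the drift term of $\mathcal L_{-T^{\top}}$, while the remaining part contributes $-\sum_{i,j,k,p,q}h^{p^{\ast}}_{ij}h^{p^{\ast}}_{ik}h^{q^{\ast}}_{kj}H^{q^{\ast}}$.

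The final step is an algebraic reduction special to $\mathbb C^{2}$. From the Gauss equation \eqref{2.1-3} together with $R_{ik}=K\delta_{ik}$ coming from \eqref{2.1-8}, one gets the pointwise relation $\sum_{p,k}h^{p^{\ast}}_{ik}h^{p^{\ast}}_{kj}=\sum_{q}H^{q^{\ast}}h^{q^{\ast}}_{ij}+\tfrac12(S-H^{2})\delta_{ij}$. Writing $A^{p}=(h^{p^{\ast}}_{ij})$ and $M=\sum_{p}H^{p^{\ast}}A^{p}$ (so $\mathrm{tr}\,M=H^{2}$), this says $\sum_{p}(A^{p})^{2}=M+\tfrac12(S-H^{2})I$, whence $\mathrm{tr}\big(M\sum_{p}(A^{p})^{2}\big)=\mathrm{tr}(M^{2})+\tfrac12(S-H^{2})H^{2}$, i.e. $\sum_{i,j,k,p,q}h^{p^{\ast}}_{ij}h^{p^{\ast}}_{ik}h^{q^{\ast}}_{kj}H^{q^{\ast}}=\sum_{i,j,p,q}H^{p^{\ast}}h^{p^{\ast}}_{ij}H^{q^{\ast}}h^{q^{\ast}}_{ij}+\tfrac12H^{2}(S-H^{2})$. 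Combining with the $K(3S-2H^{2})$ term and simplifying $\tfrac12H^{2}(H^{2}-S)+\tfrac12(H^{2}-S)(3S-2H^{2})=-\tfrac12(H^{2}-S)(H^{2}-3S)$ produces \eqref{2.1-14}. The part demanding the most care is the Simons-type commutation together with the bookkeeping of \eqref{2.1-7} in the normal-curvature and cubic terms; once the Gauss-equation identity $\sum_{p}(A^{p})^{2}=M+\tfrac12(S-H^{2})I$ is isolated, everything else is routine.
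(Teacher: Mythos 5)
Your derivation is correct: the Simons-type commutation, the collapse of the three curvature contractions to $K(3S-2H^{2})$ via \eqref{2.1-8}, the cancellation of the $\langle T,e_{k}\rangle$-terms against the drift of $\mathcal{L}_{-T^{\top}}$, and the Gauss-equation identity $\sum_{p}(A^{p})^{2}=M+\tfrac12(S-H^{2})I$ all check out and assemble into \eqref{2.1-13} and \eqref{2.1-14}. The paper itself omits the proof and defers to \cite{LLQ} and \cite{LW4}; what you have written is exactly the standard computation carried out there, so no further comment is needed.
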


\begin{lemma}\label{lemma 2.3}
Let $x:M^{2}\rightarrow \mathbb{C}^{2}$ be an $2$-dimensional complete lagrangian self-expander. We have
\begin{equation}\label{2.1-15}
\frac{1}{2}\mathcal{L}_{x^{\top}} H^{2}=\sum_{i,p}(H^{p^{\ast}}_{,i})^{2}-H^{2}-\sum_{i,j,p,q}H^{p^{\ast}}h^{p^{\ast}}_{ij}H^{q^{\ast}}h^{q^{\ast}}_{ij}
\end{equation}
and
\begin{equation}\label{2.1-16}
\frac{1}{2}\mathcal{L}_{x^{\top}}S
=\sum_{i,j,k}(h^{p^{\ast}}_{ijk})^{2}-S(\frac{3}{2}S+1)+2H^{2}S-\frac{1}{2}H^{4}
-\sum_{i,j,p,q}H^{p^{\ast}}h^{p^{\ast}}_{ij}H^{q^{\ast}}h^{q^{\ast}}_{ij}.
\end{equation}
\end{lemma}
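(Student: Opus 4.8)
The plan is to derive both identities directly from the structure equations, using the self-expander relation $H^{p^{\ast}}=\langle x,e_{p^{\ast}}\rangle$ together with the second-derivative formulas \eqref{2.1-12}. The recurring mechanism is that the tangential quantities $\langle x,e_{k}\rangle$ produced by \eqref{2.1-12} cancel exactly against the drift term $\langle x^{\top},\nabla(\cdot)\rangle$ in $\mathcal{L}_{x^{\top}}=\Delta+\langle x^{\top},\nabla\cdot\rangle$, which is precisely why the operator $\mathcal{L}_{x^{\top}}$ (rather than $\Delta$ alone) yields clean right-hand sides.

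For \eqref{2.1-15} I would start from $H^{2}=\sum_{p}(H^{p^{\ast}})^{2}$, so that $\tfrac{1}{2}\Delta H^{2}=\sum_{i,p}(H^{p^{\ast}}_{,i})^{2}+\sum_{i,p}H^{p^{\ast}}H^{p^{\ast}}_{,ii}$ by metric-compatibility of the normal connection. Setting $j=i$ in \eqref{2.1-12} and summing over $i$, and using the Codazzi symmetry \eqref{2.1-7} in the form $\sum_{i}h^{p^{\ast}}_{iik}=H^{p^{\ast}}_{,k}$ together with $\sum_{i}h^{p^{\ast}}_{ii}=H^{p^{\ast}}$, gives $\Delta^{\perp}H^{p^{\ast}}=-\sum_{k}H^{p^{\ast}}_{,k}\langle x,e_{k}\rangle-H^{p^{\ast}}-\sum_{i,k,q}h^{p^{\ast}}_{ik}h^{q^{\ast}}_{ki}H^{q^{\ast}}$. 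Multiplying by $H^{p^{\ast}}$, summing, and adding $\tfrac{1}{2}\langle x^{\top},\nabla H^{2}\rangle=\sum_{k,p}\langle x,e_{k}\rangle H^{p^{\ast}}H^{p^{\ast}}_{,k}$, the first (drift) term cancels, $\sum_{p}(H^{p^{\ast}})^{2}=H^{2}$ supplies the $-H^{2}$, and after relabelling $k\to j$ the cubic term becomes $\sum_{i,j,p,q}H^{p^{\ast}}h^{p^{\ast}}_{ij}H^{q^{\ast}}h^{q^{\ast}}_{ij}$, yielding \eqref{2.1-15}.

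For \eqref{2.1-16} I would write $\tfrac{1}{2}\Delta S=\sum(h^{p^{\ast}}_{ijk})^{2}+\sum_{i,j,p}h^{p^{\ast}}_{ij}\,\Delta h^{p^{\ast}}_{ij}$ with $\Delta h^{p^{\ast}}_{ij}=\sum_{k}h^{p^{\ast}}_{ijkk}$. The heart of the argument is a Simons-type commutation: reorder indices via the total symmetry \eqref{2.1-7} and commute the last two covariant derivatives by the Ricci identities \eqref{2.1-9}, converting $\sum_{k}h^{p^{\ast}}_{ijkk}$ into $H^{p^{\ast}}_{,ij}$ plus curvature terms built from $R_{mikl}$, $R_{mjkl}$ and $R_{m^{\ast}p^{\ast}kl}$. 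Substituting \eqref{2.1-12} for $H^{p^{\ast}}_{,ij}$ then produces three contributions after contraction with $h^{p^{\ast}}_{ij}$: the term $-\sum_{i,j,k,p}h^{p^{\ast}}_{ij}h^{p^{\ast}}_{ijk}\langle x,e_{k}\rangle=-\tfrac{1}{2}\langle x^{\top},\nabla S\rangle$, which is absorbed by the drift term of $\mathcal{L}_{x^{\top}}$; the term $-\sum_{i,j,p}(h^{p^{\ast}}_{ij})^{2}=-S$; and the quartic term $-\sum h^{p^{\ast}}_{ij}h^{p^{\ast}}_{ik}h^{q^{\ast}}_{kj}H^{q^{\ast}}$. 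Finally I would insert the two-dimensional curvature expressions \eqref{2.1-8}, namely $R_{ijkl}=R_{i^{\ast}j^{\ast}kl}=K(\delta_{ik}\delta_{jl}-\delta_{il}\delta_{jk})$ with $K=\tfrac{1}{2}(H^{2}-S)$, to evaluate all curvature contractions.

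The main obstacle will be the final bookkeeping: organizing the several quartic-in-$h$ contributions (the curvature terms proportional to $K\,S$ and the cubic-times-$H$ contraction $\sum h^{p^{\ast}}_{ij}h^{p^{\ast}}_{ik}h^{q^{\ast}}_{kj}H^{q^{\ast}}$) and simplifying them, with $K=\tfrac{1}{2}(H^{2}-S)$, into the stated polynomial $-S(\tfrac{3}{2}S+1)+2H^{2}S-\tfrac{1}{2}H^{4}-\sum_{i,j,p,q}H^{p^{\ast}}h^{p^{\ast}}_{ij}H^{q^{\ast}}h^{q^{\ast}}_{ij}$. A convenient consistency check is that this computation is \emph{identical} to the translator case \eqref{2.1-14} except for the single extra summand $-h^{p^{\ast}}_{ij}$ appearing in the Hessian formula \eqref{2.1-12}; tracking only that term shows the self-expander identity differs from the translator identity precisely by $-S$ (and, correspondingly, the $H^{2}$ identity \eqref{2.1-15} differs from \eqref{2.1-13} by $-H^{2}$), which matches the asserted formulas and confirms the coefficients.
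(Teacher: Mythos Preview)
Your proposal is correct and follows precisely the route the paper indicates: the paper does not spell out a proof of Lemma~\ref{lemma 2.3} but simply states that, with the choice $V=x^{\top}$, the identities follow from the formulas \eqref{2.1-12} and the Ricci identities \eqref{2.1-9} (referring to \cite{LLQ} and \cite{LW4} for details), which is exactly the computation you outline. Your consistency check comparing \eqref{2.1-16} with \eqref{2.1-14} via the single extra $-h^{p^{\ast}}_{ij}$ term in \eqref{2.1-12} is a clean way to confirm the coefficients.
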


In order to use the maximum principle of Omori-Yau type (Lemma \ref{lemma 2.1}), we need the following conclusions. The specific proof approach for the following conclusions is similar to \cite{LLQ} and \cite{LW4}. For completeness, we will present a short proof of it.
\begin{lemma}\label{lemma 2.4}
For a complete translator $x:M^{n}\rightarrow \mathbb{C}^{n}$ with the squared norm $S$ of the second fundamental form being bounded from above, the Bakry-Emery Ricci tensor $Ric_{V}$ is bounded from below, where $V=-T^{\top}$.
\end{lemma}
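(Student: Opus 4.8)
The plan is to show that the Bakry-Emery Ricci tensor $Ric_{V}$ with $V=-T^{\top}$ is bounded from below by combining the Gauss equation, which controls the ordinary Ricci curvature, with a pointwise estimate on the symmetric tensor $\frac{1}{2}L_{V}g$. First I would record that by \eqref{2.1-8} the surface has sectional (Gaussian) curvature $K=\frac12(H^{2}-S)$, and since $H^{2}\le 2S$ always holds on a $2$-surface (indeed $H^{2}=(\sum_i h^{p^*}_{ii})^2 \le 2\sum_i (h^{p^*}_{ii})^2 \le 2S$ for each normal direction, hence for the sum), the hypothesis $S\le C$ gives a two-sided bound on $K$; in particular $Ric$ is bounded below. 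So it remains to bound $-\frac12 L_{V}g = \frac12 L_{T^{\top}}g$ from above, i.e.\ to bound the symmetrized covariant derivative $\nabla T^{\top}$.

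The key computation is to express $\nabla_j\langle T^{\top},e_i\rangle$ intrinsically. Writing $T = T^{\top}+T^{\perp}$ and using that $T$ is a constant vector in $\mathbb{C}^{n}$, one differentiates: the Weingarten/Gauss formulas give $\nabla_i T^{\top}$ in terms of the shape operator applied to $T^{\perp}$, and since $T^{\perp}=-\vec H$ by the translator equation \eqref{1.1-1}, this becomes $\nabla_j\langle T^{\top},e_i\rangle = \sum_{p}H^{p^{*}}h^{p^{*}}_{ij}$, up to sign conventions matching \eqref{2.1-11}. Therefore $\tfrac12 (L_{T^{\top}}g)(e_i,e_j) = \sum_p H^{p^*}h^{p^*}_{ij}$, whose operator norm is controlled by $|\vec H|\,|h| \le S$ (using $H^{2}\le 2S$ and $|h|^{2}=S$), hence bounded above by $C$.

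Combining the two pieces, $Ric_{V}(e_i,e_i) = Ric(e_i,e_i) - \tfrac12(L_{V}g)(e_i,e_i) = K - \sum_p H^{p^*}h^{p^*}_{ii} \ge \tfrac12(H^{2}-S) - S \ge -\tfrac32 C$, which is the desired lower bound; one phrases this more carefully by bounding the smallest eigenvalue of the symmetric bilinear form $Ric_{V}$ on each tangent plane. The only mildly delicate point is bookkeeping the sign conventions so that the $L_{V}g$ term comes out with the tensor $\sum_p H^{p^*}h^{p^*}_{ij}$ rather than its negative, but since we only need a two-sided pointwise bound on that tensor this does not affect the conclusion; the estimate $\|\sum_p H^{p^*}h^{p^*}_{\cdot\cdot}\|\le S\le C$ is symmetric in sign. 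No deep difficulty is expected here — the proof is essentially an exercise in the structure equations \eqref{2.1-1}--\eqref{2.1-11} plus the elementary inequality $H^{2}\le 2S$ — which is consistent with the excerpt's remark that it mirrors the arguments in \cite{LLQ} and \cite{LW4}.
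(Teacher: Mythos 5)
Your proposal is correct and follows essentially the same route as the paper: both compute $\nabla T^{\top}$ from the translator equation to get $\tfrac12 L_{V}g(e_i,e_j)=\pm\sum_p H^{p^{\ast}}h^{p^{\ast}}_{ij}$ and then bound everything by $S$; the paper merely observes that this term cancels exactly against the mean-curvature term in the contracted Gauss equation \eqref{2.1-3}, yielding the cleaner identity $Ric_{V}(e_i,e_i)=-\sum_{j,p}(h^{p^{\ast}}_{ij})^{2}\ge -S$ in place of your separate triangle-inequality estimates. The only small adjustment is that the lemma is stated for general $n$, so you should bound $Ric$ directly from \eqref{2.1-3} rather than from the Gaussian curvature formula \eqref{2.1-8}, which is specific to $n=2$.
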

\begin{proof}
For any unit vector $e\in T M^{n}$, we can
choose a local tangent orthonormal frame field $\{{e_{i}}\}^{n}_{i=1}$ such that $e=e_{i}$. By the definition of translator and a simple computation, we have
\begin{equation*}
\begin{aligned}
-\frac{1}{2}L_{-T^{\top}}g(e_{i},e_{i})
=&\frac{1}{2}T^{\top}(g(e_{i},e_{i}))-g([T^{\top},e_{i}],e_{i})\\
=&g(\nabla_{e_{i}}(T-T^{\bot}),e_{i})
=\sum_{p}H^{p^{\ast}}g(\nabla_{e_{i}}e_{p^{\ast}},e_{i})\\
=&-\sum_{p}H^{p^{\ast}}h^{p^{\ast}}_{ii}.
\end{aligned}
\end{equation*}

Then \eqref{2.1-3} yields
\begin{equation*}
Ric_{-T^{\top}}(e_{i},e_{i})
=Ric(e_{i},e_{i})-\frac{1}{2}L_{-T^{\top}}g(e_{i},e_{i})
=-\sum_{j,p}(h^{p^{\ast}}_{ij})^{2}\geq-S.
\end{equation*}
It is natural to draw that Bakry-Emery Ricci tensor $Ric_{-T^{\top}}$ is bounded from below since $S$ is bounded from above.
\end{proof}

Using a similar discussion method, we can also draw the following conclusion.
\begin{lemma}\label{lemma 2.5}
For a complete self-expander $x:M^{n}\rightarrow \mathbb{C}^{n}$ with the squared norm $S$ of the second fundamental form being bounded from above, the Bakry-Emery Ricci tensor $Ric_{V}$ is bounded from below, where $V=x^{\top}$.
\end{lemma}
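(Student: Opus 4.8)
The plan is to reproduce the computation of Lemma~\ref{lemma 2.4} almost word for word, the only genuinely new ingredient being the ambient derivative of the position vector field. Fix a unit tangent vector $e\in TM^{n}$ and extend it to a local orthonormal frame $\{e_{i}\}_{i=1}^{n}$ with $e=e_{i}$. For an arbitrary $C^{1}$ tangent vector field $V$ on $M^{n}$, metric compatibility of the connection together with its being torsion-free gives the pointwise identity
\begin{equation*}
-\frac{1}{2}L_{V}g(e_{i},e_{i})=-g(\nabla_{e_{i}}V,e_{i}),
\end{equation*}
so in particular $-\frac{1}{2}L_{x^{\top}}g(e_{i},e_{i})=-g(\nabla_{e_{i}}x^{\top},e_{i})$.

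Next I would evaluate $g(\nabla_{e_{i}}x^{\top},e_{i})$ using the self-expander equation \eqref{1.1-2}. Writing $x=x^{\top}+x^{\perp}$ and using $x^{\perp}=\vec{H}=\sum_{p}H^{p^{\ast}}e_{p^{\ast}}$, we get $x^{\top}=x-\sum_{p}H^{p^{\ast}}e_{p^{\ast}}$. Since $\nabla_{e_{i}}x=e_{i}$ for the position vector field and the terms $(e_{i}H^{p^{\ast}})e_{p^{\ast}}$ are normal, pairing with the tangent vector $e_{i}$ and using $g(\nabla_{e_{i}}e_{p^{\ast}},e_{i})=-h^{p^{\ast}}_{ii}$ gives
\begin{equation*}
g(\nabla_{e_{i}}x^{\top},e_{i})=g(e_{i},e_{i})-\sum_{p}H^{p^{\ast}}g(\nabla_{e_{i}}e_{p^{\ast}},e_{i})=1+\sum_{p}H^{p^{\ast}}h^{p^{\ast}}_{ii},
\end{equation*}
hence $-\frac{1}{2}L_{x^{\top}}g(e_{i},e_{i})=-1-\sum_{p}H^{p^{\ast}}h^{p^{\ast}}_{ii}$.

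Finally I would substitute this into the Ricci equation \eqref{2.1-3}, which reads $Ric(e_{i},e_{i})=\sum_{p}H^{p^{\ast}}h^{p^{\ast}}_{ii}-\sum_{j,p}(h^{p^{\ast}}_{ij})^{2}$, to obtain
\begin{equation*}
Ric_{x^{\top}}(e_{i},e_{i})=Ric(e_{i},e_{i})-\frac{1}{2}L_{x^{\top}}g(e_{i},e_{i})=-1-\sum_{j,p}(h^{p^{\ast}}_{ij})^{2}\geq-1-S,
\end{equation*}
so that $Ric_{x^{\top}}$ is bounded from below whenever $S$ is bounded from above. I expect no real obstacle here: the argument runs exactly parallel to Lemma~\ref{lemma 2.4}, the only point requiring a little care being the extra constant $-1$ produced by $\nabla_{e_{i}}x=e_{i}$, which does not occur in the translator computation and which is harmless for the conclusion.
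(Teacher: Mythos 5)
Your proposal is correct and is precisely the ``similar discussion method'' the paper alludes to for Lemma~\ref{lemma 2.5}: repeat the Lie-derivative computation of Lemma~\ref{lemma 2.4} with $V=x^{\top}=x-\vec{H}$, where the only new term is $g(\nabla_{e_{i}}x,e_{i})=1$, yielding $Ric_{x^{\top}}(e_{i},e_{i})=-1-\sum_{j,p}(h^{p^{\ast}}_{ij})^{2}\geq -1-S$. This matches the intended argument, and the extra constant $-1$ is indeed harmless for the lower bound.
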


 \vskip10mm
\section{Proof of Main Theorem}

\vskip2mm
To draw the conclusion of Theorem \ref{theorem 1.1}, we need the following proposition.

\begin{proposition}\label{proposition 3.1}
Let $x:M^{2}\rightarrow \mathbb{C}^{2}$
be a Lagrangian translator with constant squared norm $|\vec{H}|^{2}$ of the mean curvature vector. If the squared norm $S$ of the second fundamental form is bounded from above, then $|\vec{H}|^{2}\equiv0$.
\end{proposition}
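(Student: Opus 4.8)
The plan is to exploit the two differential identities in Lemma~\ref{lemma 2.2} together with the hypothesis that $H^{2}$ is constant and the Omori--Yau maximum principle of Lemma~\ref{lemma 2.1}. Since $H^{2}$ is constant, the left-hand side of \eqref{2.1-13} vanishes, so
\begin{equation*}
\sum_{i,p}(H^{p^{\ast}}_{,i})^{2}=\sum_{i,j,p,q}H^{p^{\ast}}h^{p^{\ast}}_{ij}H^{q^{\ast}}h^{q^{\ast}}_{ij}\geq 0,
\end{equation*}
which already tells us the ``gradient term'' equals the ``$H\cdot h$'' term pointwise. The first step is therefore to record this identity and to note that, by \eqref{2.1-11}, $H^{p^{\ast}}_{,i}=\sum_{k}h^{p^{\ast}}_{ik}\langle T,e_{k}\rangle$, so $\sum_{i,p}(H^{p^{\ast}}_{,i})^{2}$ is controlled by $S\,|T^{\top}|^{2}$ and in particular is bounded (using that $S$ is bounded above and $|T^{\top}|\le |T|$).

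The second step is to apply Lemma~\ref{lemma 2.1} to a suitable bounded function. The natural choice, mirroring \cite{LLQ}, is $f=-|T^{\top}|^{2}$ (or equivalently $|T^{\bot}|^{2}=H^{2}-|T^{\top}|^{2}$, using $|T|^{2}=|T^{\top}|^{2}+|T^{\bot}|^{2}$ and $T^{\bot}=-\vec H$), which is bounded because $|T^{\top}|^2\le |T|^2$; here $V=-T^{\top}$, and $\mathrm{Ric}_V$ is bounded below by Lemma~\ref{lemma 2.4} since $S$ is bounded above. I would compute $\mathcal{L}_{-T^{\top}}|T^{\top}|^{2}$ explicitly using \eqref{2.1-11} and the translator structure; the expected outcome is a formula of the shape
\begin{equation*}
\tfrac12\,\mathcal{L}_{-T^{\top}}|T^{\top}|^{2}=\sum_{i,p}(H^{p^{\ast}}_{,i})^{2}-|T^{\top}|^{2}+(\text{quadratic-in-}h\text{ terms}),
\end{equation*}
analogous to the self-shrinker computation for $|x^{\top}|^2$. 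Plugging in the identity from step one and feeding this into the maximum principle at the sequence $\{p_t\}$ realizing $\sup|T^{\top}|^2$, the gradient terms should drop out and one is left with an inequality forcing $\sup|T^{\top}|^2$ to be controlled, ideally $\sup |T^{\top}|^2 \le 0$ after the cancellation, hence $T^{\top}\equiv 0$. Then $T=T^{\bot}=-\vec H$ is a fixed vector that is everywhere normal, so $H^{2}=|T|^{2}$ and $\langle T,e_k\rangle=0$ for all $k$; but then \eqref{2.1-11} gives $H^{p^{\ast}}_{,i}=0$, and combined with the step-one identity we get $\sum H^{p^{\ast}}h^{p^{\ast}}_{ij}H^{q^{\ast}}h^{q^{\ast}}_{ij}=0$, i.e.\ $\vec H$ is null-direction for the second fundamental form. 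One then differentiates $\langle T, e_k\rangle \equiv 0$, or uses that a nonzero parallel normal $\vec H$ forces the submanifold to split, to derive a contradiction unless $H^{2}=0$.

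Alternatively—and this may be cleaner—once $T^{\top}\equiv 0$ I would revisit \eqref{2.1-14}: with $H^2$ constant, $S$ bounded, and now $h^{p^\ast}_{ijk}\langle T,e_k\rangle$-type terms vanishing, applying Lemma~\ref{lemma 2.1} to $S$ (which is bounded above) and using \eqref{2.1-14} should pin down $S$, the full-derivative term $\sum (h^{p^\ast}_{ijk})^2$, and the curvature polynomial, and in the end force $H^2\equiv 0$. The main obstacle I anticipate is the bookkeeping in step two: getting the sign and the exact coefficient of the $|T^{\top}|^{2}$ term in $\mathcal{L}_{-T^{\top}}|T^{\top}|^{2}$ right, and ensuring that the quadratic-in-$h$ remainder terms combine with the identity from step one so that everything of indefinite sign genuinely cancels, leaving a clean one-sided inequality. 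A secondary subtlety is that the maximum principle only yields information at the sequence $\{p_t\}$, so one must argue that $\sup|T^{\top}|^2$ is actually attained in the relevant sense (or handle it via the limiting inequality alone); since $|T^\top|^2$ is bounded this is routine but must be stated carefully.
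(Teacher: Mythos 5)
There is a genuine gap, and it sits at the center of your plan. Your ``step two'' proposes to apply Lemma \ref{lemma 2.1} to $f=-|T^{\top}|^{2}$ and to extract an inequality forcing $\sup|T^{\top}|^{2}\le 0$. But for a translator $T^{\bot}=-\vec{H}$, so $|T^{\top}|^{2}=|T|^{2}-|T^{\bot}|^{2}=|T|^{2}-H^{2}$, which under the standing hypothesis that $H^{2}$ is constant is itself a \emph{constant} function; the maximum principle applied to it yields no information, and the anticipated formula for $\mathcal{L}_{-T^{\top}}|T^{\top}|^{2}$ is identically zero on both sides. Worse, even if one could somehow conclude $T^{\top}\equiv 0$, that would give $H^{2}=|T|^{2}>0$ everywhere (since $T$ is a nonzero constant vector), i.e.\ the \emph{opposite} of the desired conclusion; your subsequent attempt to ``derive a contradiction unless $H^{2}=0$'' cannot work from that starting point. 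Also note the identity you quote, $|T^{\bot}|^{2}=H^{2}-|T^{\top}|^{2}$, is incorrect: the correct relations are $|T^{\bot}|^{2}=H^{2}$ and $|T^{\top}|^{2}=|T|^{2}-H^{2}$.

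Your ``alternative'' at the end --- applying Lemma \ref{lemma 2.1} together with \eqref{2.1-14} to the function $S$ --- is indeed the correct strategy and is what the paper does, but as stated it is only a direction, not a proof, and all of the actual content is missing. The argument requires: (i) choosing a frame with $\vec{H}=H^{1^{\ast}}e_{1^{\ast}}$ and observing the algebraic inequality $H^{2}\le\frac{4}{3}S$, so that $\inf S=0$ immediately gives $\vec{H}\equiv 0$; (ii) applying the maximum principle to $-S$ (not to $S$; one needs a sequence $\{p_{t}\}$ realizing $\inf S$ with $\mathcal{L}_{-T^{\top}}S(p_{t})\ge 0$ in the limit, so that \eqref{3.1-2} produces the inequality $(H^{2}-\bar S)(H^{2}-3\bar S)\le 0$ after controlling $\sum(h^{p^{\ast}}_{ijk})^{2}-\sum(H^{p^{\ast}}_{,i})^{2}$); and (iii) a delicate case analysis at the limit points --- first ruling out $\bar h^{2^{\ast}}_{11}\neq 0$ via \eqref{3.1-4}--\eqref{3.1-7}, then splitting on whether $\lim_{t\to\infty}\langle T,e_{1}\rangle(p_{t})$ vanishes --- to show that $\inf S>0$ is impossible. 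None of steps (i)--(iii) appears in your proposal, and (iii) is the heart of the matter; without it the ``quadratic-in-$h$'' and third-order terms in \eqref{3.1-2} have no a priori sign and nothing cancels. I would therefore not accept the proposal as a proof: the primary route is vacuous, and the fallback is an unexecuted outline of the paper's actual argument.
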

\begin{proof}

If we had $|\vec{H}|^{2}\neq0$, we choose a local frame field $\{e_{1}, e_{2}\}$
such that
$$ \vec{H}=H^{1^{\ast}}e_{1^{\ast}}, \ \ H^{1^{\ast}}=|\vec{H}|=H, \ \ H^{2^{\ast}}=h^{2^{\ast}}_{11}+h^{2^{\ast}}_{22}=0.
$$
Then,
$$S=(h^{1^{\ast}}_{11})^{2}+3(h^{1^{\ast}}_{22})^{2}+4(h^{2^{\ast}}_{11})^{2}, \ \ H^{2}=(h^{1^{\ast}}_{11}+h^{1^{\ast}}_{22})^{2}\leq \frac{4}{3}\Big((h^{1^{\ast}}_{11})^{2}+3(h^{1^{\ast}}_{22})^{2}\Big)\leq \frac{4}{3}S$$
and the equality of the above inequality holds if and only if
$$h^{1^{\ast}}_{11}=3h^{1^{\ast}}_{22}, \ \ h^{2^{\ast}}_{11}=0.$$
Since $|\vec{H}|^{2}$ is constant, the lemma \ref{lemma 2.2} implies
\begin{equation}\label{3.1-1}
\sum_{i,p}(H^{p^{\ast}}_{,i})^{2}=\sum_{i,j,p,q}H^{p^{\ast}}h^{p^{\ast}}_{ij}H^{q^{\ast}}h^{q^{\ast}}_{ij}
\end{equation}
and
\begin{equation}\label{3.1-2}
\frac{1}{2}\mathcal{L}_{-T^{\top}}S=\sum_{i,j,k}(h^{p^{\ast}}_{ijk})^{2}
-\sum_{i,p}(H^{p^{\ast}}_{,i})^{2}-\frac{1}{2}(H^{2}-S)(H^{2}-3S).
\end{equation}
Since $S$ is bounded from above, we know that the Bakry-Emery Ricci curvature of $x:M^{2}\rightarrow \mathbb{C}^{2}$ is bounded from below from the lemma \ref{lemma 2.4}. By applying the maximum principle
of Omori-Yau type concerning the operator $\mathcal{L}_{-T^{\top}}$ to the function $-S$, there exists a sequence $\{p_{t}\} \subset M^{2}$ such that
\begin{equation*}
\lim_{t\rightarrow\infty} S(p_{t})=\inf S, \ \
\lim_{t\rightarrow\infty} |\nabla S|(p_{t})=0, \ \
\lim_{t\rightarrow\infty}\mathcal{L}_{-T^{\top}} S(p_{t})\geq 0.
\end{equation*}
And because $S$ is bounded from above, we know that
$\{h^{p^{\ast}}_{ij}(p_{t})\}$ are bounded sequences for $ i, j, p = 1,2$.
Hence we can assume
\begin{equation*}
\lim_{t\rightarrow\infty} S(p_{t})=\inf S=\bar S, \ \ \lim_{t\rightarrow\infty}h^{p^{\ast}}_{ij}(p_{t})=\bar h^{p^{\ast}}_{ij},
 \ \ i, j, p=1, 2.
\end{equation*}
Without loss of the generality, we can
assume $h^{p^{\ast}}_{ij}(p_{t})\neq0$ for $i, j, p=1, 2, 3$. Unless otherwise specified, the following equations are considered at point $p_{t}\in M^{2}$.

If $\inf S=0$, we draw that $|\vec{H}|^{2}\equiv0$ since $|\vec{H}|^{2}\leq\frac{4}{3}S$.
Next, we will only consider $\inf S>0$. In fact, this situation does not exist.

\noindent
Since $|\nabla H^{2}|=0$ and $|\nabla H^{2}|^{2}=4\sum_{k}(\sum_{p}H^{p^{\ast}}H^{p^{\ast}}_{,k})^{2}$,
we can see that
\begin{equation}\label{3.1-3}
H^{1^{\ast}}_{,k}=H^{k^{\ast}}_{,1}=0, \ \ h^{1^{\ast}}_{11k}+h^{1^{\ast}}_{22k}=0, \ \ k=1, 2.
\end{equation}
It follows from the first formula of \eqref{2.1-11} and $h^{2^{\ast}}_{11}+h^{2^{\ast}}_{22}=0$ that
\begin{equation}\label{3.1-4}
H^{1^{\ast}}_{,1}=h^{1^{\ast}}_{11}\langle T, e_{1} \rangle+h^{2^{\ast}}_{11}\langle T, e_{2} \rangle, \ \
H^{1^{\ast}}_{,2}=h^{2^{\ast}}_{11}\langle T, e_{1} \rangle+h^{1^{\ast}}_{22}\langle T, e_{2} \rangle
\end{equation}
and
\begin{equation}\label{3.1-5}
H^{2^{\ast}}_{,1}=h^{2^{\ast}}_{11}\langle T, e_{1} \rangle+h^{1^{\ast}}_{22}\langle T, e_{2} \rangle, \ \
H^{2^{\ast}}_{,2}=h^{1^{\ast}}_{22}\langle T, e_{1} \rangle-h^{2^{\ast}}_{11}\langle T, e_{2} \rangle.
\end{equation}
Choosing $\nabla_{k}S=2a_{k}$ for $k=1,2$, \eqref{3.1-3} and $\lim_{t\rightarrow\infty} |\nabla S|(p_{t})=0$ imply that
\begin{equation}\label{3.1-6}
(h^{1^{\ast}}_{11}-3h^{1^{\ast}}_{22}) h^{1^{\ast}}_{11k}+3h^{2^{\ast}}_{11}h^{2^{\ast}}_{11k}-h^{2^{\ast}}_{11}h^{2^{\ast}}_{22k}=a_{k}, \ \ \lim_{t\rightarrow\infty}a_{k}(p_{t})=0, \ \ k=1, 2.
\end{equation}
Combining \eqref{3.1-3} and \eqref{3.1-6}, we infer
\begin{equation}\label{3.1-7}
\begin{aligned}
&\big((h^{1^{\ast}}_{11}-3h^{1^{\ast}}_{22})^{2}+12(h^{2^{\ast}}_{11})^{2}\big)h^{1^{\ast}}_{111}
+4(h^{2^{\ast}}_{11})^{2} h^{2^{\ast}}_{222}=b_{1}, \\
&\big((h^{1^{\ast}}_{11}-3h^{1^{\ast}}_{22})^{2}+12(h^{2^{\ast}}_{11})^{2}\big) h^{2^{\ast}}_{111}-h^{2^{\ast}}_{11}(h^{1^{\ast}}_{11}-3h^{1^{\ast}}_{22})h^{2^{\ast}}_{222}
=b_{2}.
\end{aligned}
\end{equation}
where $b_{1}=(h^{1^{\ast}}_{11}-3h^{1^{\ast}}_{22})a_{1}-4h^{2^{\ast}}_{11} a_{2}$,
$b_{2}=3h^{2^{\ast}}_{11} a_{1}+(h^{1^{\ast}}_{11}-3h^{1^{\ast}}_{22})a_{2}$ and
$\lim_{t\rightarrow\infty}b_{k}(p_{t})=0$ for $k=1,2$.

With all these preparations, we will use the proof by contradiction to prove that $$\bar h^{2^{\ast}}_{11}=0.$$
Now assume that $\bar h^{2^{\ast}}_{11}\neq0$. By
\eqref{3.1-3} and \eqref{3.1-4}, we get that
\begin{equation}\label{3.1-8}
\big(\bar h^{1^{\ast}}_{11}\bar h^{1^{\ast}}_{22}-(\bar h^{2^{\ast}}_{11})^{2}\big)\lim_{t\rightarrow\infty} \langle T, e_{1} \rangle(p_{t})=0, \ \
\big(\bar h^{1^{\ast}}_{11}\bar h^{1^{\ast}}_{22}-(\bar h^{2^{\ast}}_{11})^{2}\big)\lim_{t\rightarrow\infty} \langle T, e_{2} \rangle(p_{t})=0.
\end{equation}
If $\bar h^{1^{\ast}}_{11}\bar h^{1^{\ast}}_{22}
-(\bar h^{2^{\ast}}_{11})^{2}\neq 0$,
\eqref{3.1-8} yields
\begin{equation*}
\lim_{t\rightarrow\infty} \langle T, e_{1} \rangle(p_{t})=\lim_{t\rightarrow\infty} \langle T, e_{2} \rangle(p_{t})=0.
\end{equation*}
Thus, using \eqref{3.1-3} and \eqref{3.1-5}, we obtain that
\begin{equation}\label{3.1-9}
\lim_{t\rightarrow\infty}\big(\sum_{i,p}(H^{p^{\ast}}_{,i})^{2}\big)(p_{t})
=\lim_{t\rightarrow\infty}(H^{2^{\ast}}_{,2})^{2}(p_{t})=0.
\end{equation}
It is straightforward to see from \eqref{3.1-1} and \eqref{3.1-9} that
\begin{equation*}
\lim_{t\rightarrow\infty}\big(\sum_{i,j,p,q}H^{p^{\ast}}h^{p^{\ast}}_{ij}H^{q^{\ast}}h^{q^{\ast}}_{ij}\big)
(p_{t})=H^{2}\big((\bar h^{1^{\ast}}_{11})^{2}+(\bar h^{1^{\ast}}_{22})^{2}+2(\bar h^{2^{\ast}}_{11})^{2}\big)=0.
\end{equation*}
It contradicts the hypothesis.

\noindent
If $\bar h^{1^{\ast}}_{11}\bar h^{1^{\ast}}_{22}-(\bar h^{2^{\ast}}_{11})^{2}=0$,
it is obvious to draw $\bar h^{1^{\ast}}_{11}+3\bar h^{1^{\ast}}_{22}\neq0$, otherwise we would have $\bar h^{2^{\ast}}_{11}=0$. It contradicts the hypothesis.
Thus, \eqref{3.1-7} yields
\begin{equation}\label{3.1-10}
h^{1^{\ast}}_{111}=-\frac{4(h^{2^{\ast}}_{11})^{2}h^{2^{\ast}}_{222}-b_{1}}{(h^{1^{\ast}}_{11}-3h^{1^{\ast}}_{22})^{2}
+12(h^{2^{\ast}}_{11})^{2}}, \ \
h^{2^{\ast}}_{111}=\frac{h^{2^{\ast}}_{11}(h^{1^{\ast}}_{11}-3h^{1^{\ast}}_{22})h^{2^{\ast}}_{222}+b_{2}}{(h^{1^{\ast}}_{11}-3h^{1^{\ast}}_{22})^{2}
+12(h^{2^{\ast}}_{11})^{2}}.
\end{equation}
By \eqref{3.1-3} and \eqref{3.1-10}, a simple computation shows
\begin{equation*}
\sum_{i,p}(H^{p^{\ast}}_{,i})^{2}=(H^{2^{\ast}}_{,2})^{2}
=\Big(\frac{\big((h^{1^{\ast}}_{11}-3h^{1^{\ast}}_{22})^{2}
+16(h^{2^{\ast}}_{11})^{2}\big)h^{2^{\ast}}_{222}-b_{1}}{(h^{1^{\ast}}_{11}-3h^{1^{\ast}}_{22})^{2}
+12(h^{2^{\ast}}_{11})^{2}}\Big)^{2}
\end{equation*}
and
\begin{equation*}
\begin{aligned}
\sum_{i,j,k}(h^{p^{\ast}}_{ijk})^{2}
=&7(h^{1^{\ast}}_{111})^{2}
+8(h^{2^{\ast}}_{111})^{2}+(h^{2^{\ast}}_{222})^{2} \\
=&\frac{7\big(4(h^{2^{\ast}}_{11})^{2}h^{2^{\ast}}_{222}-b_{1}\big)^{2}
+8\big(h^{2^{\ast}}_{11}(h^{1^{\ast}}_{11}-3h^{1^{\ast}}_{22})h^{2^{\ast}}_{222}+b_{2}\big)^{2}}{\big((h^{1^{\ast}}_{11}-3h^{1^{\ast}}_{22})^{2}
+12(h^{2^{\ast}}_{11})^{2}\big)^{2}}+(h^{2^{\ast}}_{222})^{2}.
\end{aligned}
\end{equation*}
Then
\begin{equation*}
\begin{aligned}
\lim_{t\rightarrow\infty}\big(\sum_{i,p}(H^{p^{\ast}}_{,i})^{2}\big)(p_{t})
=&\frac{\big((\bar h^{1^{\ast}}_{11})^{2}+9(\bar h^{1^{\ast}}_{22})^{2}+10(\bar h^{2^{\ast}}_{11})^{2}\big)^{2}}{(\bar h^{1^{\ast}}_{11}+3\bar h^{1^{\ast}}_{22})^{4}}\lim_{t\rightarrow\infty}(h^{2^{\ast}}_{222})^{2}(p_{t}), \\
\lim_{t\rightarrow\infty}\big(\sum_{i,j,k}(h^{p^{\ast}}_{ijk})^{2}\big)(p_{t})
=&\frac{\big((\bar h^{1^{\ast}}_{11})^{2}+9(\bar h^{1^{\ast}}_{22})^{2}+10(\bar h^{2^{\ast}}_{11})^{2}\big)^{2}}{(\bar h^{1^{\ast}}_{11}+3\bar h^{1^{\ast}}_{22})^{4}}\lim_{t\rightarrow\infty}(h^{2^{\ast}}_{222})^{2}(p_{t}).
\end{aligned}
\end{equation*}
Hence from \eqref{3.1-2} one sees
\begin{equation*}
(H^{2}-\bar S)(H^{2}-3\bar S)\leq0
\end{equation*}
It is a contradiction since
$H^{2}=(\bar h^{1^{\ast}}_{11})^{2}+(\bar h^{1^{\ast}}_{22})^{2}+2(\bar h^{2^{\ast}}_{11})^{2}$ and $H^{2}<\bar S$.

Next, we will use $\bar h^{2^{\ast}}_{11}=0$ to complete the proof of proposition \ref{proposition 3.1}.

Since $\bar h^{2^{\ast}}_{11}=0$ and $|\lim_{t\rightarrow\infty} \langle T, e_{k} \rangle(p_{t})|<\infty$ for $k=1,2$, \eqref{3.1-3}, \eqref{3.1-4} and \eqref{3.1-5} show that
\begin{equation}\label{3.1-11}
\begin{aligned}
&\bar h^{1^{\ast}}_{11}\lim_{t\rightarrow\infty} \langle T, e_{1} \rangle(p_{t})=\bar h^{1^{\ast}}_{22}\lim_{t\rightarrow\infty} \langle T, e_{2} \rangle(p_{t})=0, \\
&\lim_{t\rightarrow\infty}H^{2^{\ast}}_{,2}(p_{t})=\bar h^{1^{\ast}}_{22}\lim_{t\rightarrow\infty} \langle T, e_{1} \rangle(p_{t}).
\end{aligned}
\end{equation}
If $\lim_{t\rightarrow\infty} \langle T, e_{1} \rangle(p_{t})=0$, \eqref{3.1-11} shows
$$\lim_{t\rightarrow\infty}\big(\sum_{i,p}(H^{p^{\ast}}_{,i})^{2}\big)(p_{t})
=\lim_{t\rightarrow\infty}H^{2^{\ast}}_{,2}(p_{t})=0.$$
Thus,
\begin{equation*} \lim_{t\rightarrow\infty}\big(\sum_{i,j,p,q}H^{p^{\ast}}h^{p^{\ast}}_{ij}H^{q^{\ast}}h^{q^{\ast}}_{ij}
\big)(p_{t})=H^{2}\big((\bar h^{1^{\ast}}_{11})^{2}+(\bar h^{1^{\ast}}_{22})^{2}\big)=0.
\end{equation*}
It contradicts the hypothesis.

\noindent
If $\lim_{t\rightarrow\infty} \langle T, e_{1} \rangle(p_{t})\neq0$, \eqref{3.1-11} yields
\begin{equation}\label{3.1-12}
\bar h^{1^{\ast}}_{11}=0, \ \ \bar h^{1^{\ast}}_{22}=H, \ \ \bar S=3H^{2}, \ \
\lim_{t\rightarrow\infty} \langle T, e_{2} \rangle(p_{t})=0.
\end{equation}
It is straightforward to calculate from $|\vec{H}|^{2}=constant$ and \eqref{3.1-1} that

\begin{equation*}
\sum_{p}H^{p^{\ast}}_{,i}H^{p^{\ast}}_{,j}
+\sum_{p}H^{p^{\ast}}H^{p^{\ast}}_{,ij}=0, \ \ i,j=1,2
\end{equation*}
and
\begin{equation*}
\sum_{i,p}H^{p^{\ast}}_{,i}H^{p^{\ast}}_{,ik}
=\sum_{i,j}(\sum_{p}H^{p^{\ast}}h^{p^{\ast}}_{ij})(\sum_{p}H^{p^{\ast}}_{,k}h^{p^{\ast}}_{ij}
+\sum_{p}H^{p^{\ast}}h^{p^{\ast}}_{ijk}), \ \ k=1,2.
\end{equation*}
Thus, choosing $i=j=1$ and $k=1$ in the above equations, we know
\begin{equation}\label{3.1-13}
H^{1^{\ast}}_{,11}=0, \ \
H^{2^{\ast}}_{,2}H^{2^{\ast}}_{,21}
=H^{2}\sum_{ij}h^{1^{\ast}}_{ij}h^{1^{\ast}}_{ij1}
=H^{2}\big((h^{1^{\ast}}_{11}-h^{1^{\ast}}_{22})h^{1^{\ast}}_{111}+2 h^{2^{\ast}}_{11}h^{2^{\ast}}_{111}\big).
\end{equation}
Choosing $k=1$ in \eqref{3.1-6}, we have
$$(h^{1^{\ast}}_{11}-3 h^{1^{\ast}}_{22})h^{1^{\ast}}_{111}+4 h^{2^{\ast}}_{11}h^{2^{\ast}}_{111}=a_{1}.$$
Hence by \eqref{3.1-13} and $H=h^{1^{\ast}}_{11}+h^{1^{\ast}}_{22}$, we can write
\begin{equation}\label{3.1-14}
H^{2^{\ast}}_{,2}H^{2^{\ast}}_{,21}
=\frac{1}{2}H^{2}(Hh^{1^{\ast}}_{111}+a_{1}).
\end{equation}
Besides, \eqref{2.1-11} yields that
\begin{equation*}
H^{1^{\ast}}_{,11}
=\sum_{k}h^{1^{\ast}}_{11k}\langle T, e_{k}\rangle-H\sum_{k}(h^{1^{\ast}}_{1k})^{2}, \ \
H^{2^{\ast}}_{,21}
=\sum_{k}h^{2^{\ast}}_{21k}\langle T, e_{k}\rangle-H\sum_{k}h^{1^{\ast}}_{1k}h^{2^{\ast}}_{2k}.
\end{equation*}
Then by $h^{1^{\ast}}_{11k}+h^{1^{\ast}}_{22k}=0$ and $H^{1^{\ast}}_{,11}=0$, we obtain
\begin{equation}\label{3.1-15}
H^{2^{\ast}}_{,21}=-H\big(\sum_{k}(h^{1^{\ast}}_{1k})^{2}
+\sum_{k}h^{1^{\ast}}_{1k}h^{2^{\ast}}_{2k}\big).
\end{equation}
According to \eqref{3.1-3} and \eqref{3.1-12}, \eqref{3.1-14} and \eqref{3.1-15} show
\begin{equation}\label{3.1-16}
\lim_{t\rightarrow\infty}H^{2^{\ast}}_{,21}(p_{t})=0, \ \ \lim_{t\rightarrow\infty}h^{1^{\ast}}_{111}(p_{t})=\lim_{t\rightarrow\infty}h^{1^{\ast}}_{221}(p_{t})=0.
\end{equation}
By using \eqref{3.1-1} and \eqref{3.1-16}, we know
\begin{equation*}
\lim_{t\rightarrow\infty}(h^{2^{\ast}}_{222})^{2}(p_{t})
=\lim_{t\rightarrow\infty}(H^{2^{\ast}}_{,2})^{2}(p_{t})= H^{4}.
\end{equation*}
Hence from the second equation of \eqref{3.1-7}, \eqref{3.1-16} and $\bar h^{1^{\ast}}_{11}=\bar h^{2^{\ast}}_{11}=0$, we have that
$$\lim_{t\rightarrow\infty}h^{1^{\ast}}_{112}(p_{t})=0.$$
Namely,
$$\lim_{t\rightarrow\infty}h^{1^{\ast}}_{11k}(p_{t})
=\lim_{t\rightarrow\infty}h^{1^{\ast}}_{22k}(p_{t})=0, \ \ k=1,2.$$
It is easy to draw
$$
\lim_{t\rightarrow\infty}\big(\sum_{i,p}(H^{p^{\ast}}_{,i})^{2}\big)(p_{t})
=\lim_{t\rightarrow\infty}\big(\sum_{i,j,k}(h^{p^{\ast}}_{ijk})^{2}\big)(p_{t})
=\lim_{t\rightarrow\infty}(h^{2^{\ast}}_{222})^{2}(p_{t}).
$$
Then \eqref{3.1-2} implies that
\begin{equation*}
(H^{2}-\bar S)(H^{2}-3\bar S)\leq0.
\end{equation*}
It is a contradiction since $\bar S=3H^{2}$.
The proof of the Proposition \ref{proposition 3.1} is finished.
\end{proof}

\vskip3mm
\noindent
{\it Proof of Theorem \ref{theorem 1.1}}.
From the Proposition \ref{proposition 3.1} and the definition \eqref{1.1-1}
of translators, we show that $T^{\bot}=0$ and $x(M^{2})$ is always tangent $T^{\top}=T$,
which means that $x(M^{2})$ consists of a family
of parallel straight lines. Since $x(M^{2})$ is minimal and complete, we easily obtain that
$x(M^{2})=\mathbb{R}^{2}$.

\vskip3mm
\noindent
{\it Proof of Theorem \ref{theorem 1.2}}.
If $\xi=0$, the theorem reduces to Theorem \ref{theorem 1.1}.
If $\xi\neq0$, we have a globally defined parallel unit normal vector field $\bar e_{1^{\ast}}=\frac{\xi}{|\xi|}$. By rotating $\bar e_{1^{\ast}}$ by an angle of $\frac{\pi}{2}$ in the normal bundle, we obtain another parallel unit normal vector $\bar e_{2^{\ast}}$, which implies that the normal bundle is flat. Since $x$ is Lagrangian,
it follows that the complex structure $J$ is a bundle isometry between the tangent bundle $x_{\ast}(TM^{2})$ and the normal bundle $T^{\perp}M^{2}$. This shows that $M^{2}$ is flat, namely, the Gauss curvature $K\equiv0$. Hence, by $|\vec{H}|^{2}=constant$, we can use the
classification theorem of Hoffman (\cite{Hoff}) to complete the proof of
Theorem \ref{theorem 1.2}.

By applying a research approach similar to the Proposition \ref{proposition 3.1}, we can obtain a similar conclusions about self-expander.

\begin{proposition}\label{proposition 3.2}
Let $x:M^{2}\rightarrow \mathbb{C}^{2}$
be a Lagrangian self-expander with constant squared norm $|\vec{H}|^{2}$ of the mean curvature vector. If the squared norm $S$ of the second fundamental form is bounded from above, then $|\vec{H}|^{2}\equiv0$.
\end{proposition}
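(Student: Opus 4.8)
The plan is to transplant the argument of Proposition~\ref{proposition 3.1} to the self-expander setting essentially verbatim, replacing Lemma~\ref{lemma 2.2} by Lemma~\ref{lemma 2.3}, Lemma~\ref{lemma 2.4} by Lemma~\ref{lemma 2.5}, and the formulas \eqref{2.1-11} by \eqref{2.1-12}. Suppose $|\vec{H}|^{2}\neq 0$ and fix a local adapted Lagrangian frame with $\vec{H}=H^{1^{\ast}}e_{1^{\ast}}$, $H^{1^{\ast}}=H$ and $H^{2^{\ast}}=h^{2^{\ast}}_{11}+h^{2^{\ast}}_{22}=0$, so that $S=(h^{1^{\ast}}_{11})^{2}+3(h^{1^{\ast}}_{22})^{2}+4(h^{2^{\ast}}_{11})^{2}$ and $H^{2}\le\frac{4}{3}S$ exactly as before. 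Since $|\vec{H}|^{2}$ is constant, \eqref{2.1-15} gives
\begin{equation*}
\sum_{i,p}(H^{p^{\ast}}_{,i})^{2}=H^{2}+\sum_{i,j,p,q}H^{p^{\ast}}h^{p^{\ast}}_{ij}H^{q^{\ast}}h^{q^{\ast}}_{ij},
\end{equation*}
and substituting this into \eqref{2.1-16} yields the self-expander analogue of \eqref{3.1-2},
\begin{equation*}
\tfrac12\mathcal{L}_{x^{\top}}S=\sum_{i,j,k}(h^{p^{\ast}}_{ijk})^{2}-\sum_{i,p}(H^{p^{\ast}}_{,i})^{2}-\tfrac12(H^{2}-S)(H^{2}-3S)-(S-H^{2}).
\end{equation*}
The only structural novelties are the $+H^{2}$ on the right of the first identity and the $-(S-H^{2})$ on the right of the second; both will turn out to sharpen, not weaken, the contradictions.

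Next, since $S$ is bounded above, Lemma~\ref{lemma 2.5} gives that $Ric_{x^{\top}}$ is bounded below, so applying the Omori--Yau principle (Lemma~\ref{lemma 2.1}) with $V=x^{\top}$ to $-S$ produces a sequence $\{p_{t}\}$ with $S(p_{t})\to\inf S=:\bar S$, $|\nabla S|(p_{t})\to 0$, $\lim_{t\to\infty}\mathcal{L}_{x^{\top}}S(p_{t})\ge 0$, and (after passing to a subsequence) $h^{p^{\ast}}_{ij}(p_{t})\to\bar h^{p^{\ast}}_{ij}$. If $\bar S=0$, then $H^{2}\le\frac43 S$ forces $|\vec{H}|^{2}\equiv 0$ and we are done; so it remains to exclude $\bar S>0$. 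From $|\nabla H^{2}|=0$ one gets, as in \eqref{3.1-3}, that $H^{1^{\ast}}_{,k}=H^{k^{\ast}}_{,1}=0$ and $h^{1^{\ast}}_{11k}+h^{1^{\ast}}_{22k}=0$ for $k=1,2$ (using the Lagrangian symmetry $H^{p^{\ast}}_{,i}=H^{i^{\ast}}_{,p}$), while \eqref{2.1-12} rewrites each $H^{p^{\ast}}_{,i}$ as $-\sum_{k}h^{p^{\ast}}_{ik}\langle x,e_{k}\rangle$; in particular $\sum_{i,p}(H^{p^{\ast}}_{,i})^{2}$ stays bounded because the right side of the first displayed identity does. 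Feeding $|\nabla S|(p_{t})\to 0$ into these relations gives the counterparts of \eqref{3.1-6}--\eqref{3.1-7} among the $h^{p^{\ast}}_{ijk}$ and the subsequential limits of the coefficients $\langle x,e_{k}\rangle$; note that the purely algebraic identities \eqref{3.1-6}--\eqref{3.1-7} and \eqref{3.1-10} use only the constancy of $|\vec{H}|^{2}$, hence carry over unchanged.

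The endgame follows Proposition~\ref{proposition 3.1} step by step. One first proves $\bar h^{2^{\ast}}_{11}=0$ by contradiction, splitting on whether the determinant $\bar h^{1^{\ast}}_{11}\bar h^{1^{\ast}}_{22}-(\bar h^{2^{\ast}}_{11})^{2}$ of the shape operator $A_{e_{1^{\ast}}}$ vanishes. If it does not, then for $t$ large the homogeneous linear system $H^{1^{\ast}}_{,1}=H^{2^{\ast}}_{,1}=0$ in $(\langle x,e_{1}\rangle,\langle x,e_{2}\rangle)$ has invertible matrix, hence only the trivial solution, so $\langle x,e_{k}\rangle(p_{t})=0$ and $\sum_{i,p}(H^{p^{\ast}}_{,i})^{2}(p_{t})=0$; the first displayed identity then forces $H^{2}\big((h^{1^{\ast}}_{11})^{2}+(h^{1^{\ast}}_{22})^{2}+2(h^{2^{\ast}}_{11})^{2}\big)=-H^{2}<0$ at $p_{t}$, a contradiction. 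If it does vanish, then $\bar h^{1^{\ast}}_{11}\bar h^{1^{\ast}}_{22}=(\bar h^{2^{\ast}}_{11})^{2}>0$, one solves for $h^{1^{\ast}}_{111},h^{2^{\ast}}_{111}$ as in \eqref{3.1-10}, computes $\sum(H^{p^{\ast}}_{,i})^{2}$ and $\sum(h^{p^{\ast}}_{ijk})^{2}$ (which share the same limit), and the inequality $\mathcal{L}_{x^{\top}}S\ge 0$ collapses, via the second display, to $(\bar S-H^{2})\big(\tfrac12(3\bar S-H^{2})+1\big)\le 0$, impossible since $\bar S-H^{2}=2(\bar h^{1^{\ast}}_{22})^{2}+2(\bar h^{2^{\ast}}_{11})^{2}>0$ and $3\bar S-H^{2}\ge\frac53\bar S>0$. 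With $\bar h^{2^{\ast}}_{11}=0$ in hand, a final case split on $\lim\langle x,e_{1}\rangle(p_{t})$ --- handled exactly as in \eqref{3.1-11}--\eqref{3.1-16}, except that differentiating the identity $H^{1^{\ast}}_{,11}=0$ now also produces the lower-order terms $-h^{p^{\ast}}_{ij}$ coming from \eqref{2.1-12} --- pins down $\bar S=3H^{2}$ together with $\bar h^{1^{\ast}}_{11k}=\bar h^{1^{\ast}}_{22k}=0$, whereupon the second display forces $8H^{4}+2H^{2}\le 0$, i.e. $H=0$, contradicting $|\vec{H}|^{2}\neq 0$; in the complementary branch $\lim\langle x,e_{1}\rangle(p_{t})=0$ gives $\sum_{i,p}(H^{p^{\ast}}_{,i})^{2}\to 0$ and the first display again yields a sign contradiction. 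I expect the only genuinely new point, compared with the translator case, to be that $\langle x,e_{k}\rangle$ is not a priori bounded (whereas $\langle T,e_{k}\rangle$ is, $|T|$ being fixed): this is precisely why every estimate must be organized around the bounded quantities $-H^{p^{\ast}}_{,i}=\sum_{k}h^{p^{\ast}}_{ik}\langle x,e_{k}\rangle$ and why one must isolate the case where $A_{e_{1^{\ast}}}$ is singular --- the role played by the determinant condition in \eqref{3.1-8}. Keeping track of the signs of the extra terms $-h^{p^{\ast}}_{ij}$ and $-(S-H^{2})$ is a secondary chore, and both prove harmless.
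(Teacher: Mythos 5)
Your setup and the first half of the argument coincide with the paper's: the two displayed identities are exactly \eqref{3.1-17} and \eqref{3.1-18}, the Omori--Yau application via Lemma \ref{lemma 2.5} is the same, and your proof of $\bar h^{2^{\ast}}_{11}=0$ (splitting on whether $\bar h^{1^{\ast}}_{11}\bar h^{1^{\ast}}_{22}-(\bar h^{2^{\ast}}_{11})^{2}$ vanishes, using that $H^{1^{\ast}}_{,1}=H^{1^{\ast}}_{,2}=0$ is a homogeneous system in $\langle x,e_{k}\rangle$) is the paper's argument, with the correct modified factor $H^{2}-3S-2$. The endgame, however, does not go through as you describe it, for two concrete reasons.

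First, the final case split on $\lim_{t}\langle x,e_{1}\rangle(p_{t})$, transplanted from \eqref{3.1-11}--\eqref{3.1-12}, is not available here: $\langle x,e_{k}\rangle$ is unbounded, so these limits need not exist, and the key step of \eqref{3.1-11} --- passing term by term to the limit in $0=H^{1^{\ast}}_{,1}=-h^{1^{\ast}}_{11}\langle x,e_{1}\rangle-h^{2^{\ast}}_{11}\langle x,e_{2}\rangle$ to conclude $\bar h^{1^{\ast}}_{11}\lim\langle x,e_{1}\rangle(p_{t})=0$ --- fails because $h^{2^{\ast}}_{11}\langle x,e_{2}\rangle$ is a $0\cdot\infty$ indeterminate (the same problem infects your ``complementary branch,'' where $H^{2^{\ast}}_{,2}=-h^{1^{\ast}}_{22}\langle x,e_{1}\rangle+h^{2^{\ast}}_{11}\langle x,e_{2}\rangle$ need not tend to $0$). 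You flag this as ``the only genuinely new point'' but do not resolve it; the paper resolves it by replacing that split with a split on which of $\bar h^{1^{\ast}}_{11}$, $\bar h^{1^{\ast}}_{22}$ vanishes, so that only the bounded combinations $H^{p^{\ast}}_{,i}$ are ever sent to the limit. Second, the extra terms $-h^{p^{\ast}}_{ij}$ in \eqref{2.1-12} are not ``harmless'': in the branch $\bar h^{1^{\ast}}_{11}=0$, $\bar h^{1^{\ast}}_{22}=H$ they produce $\lim_{t}H^{2^{\ast}}_{,21}(p_{t})=-H\neq0$ (instead of $0$ as in \eqref{3.1-16}), whence \eqref{3.1-29} together with $(H^{2^{\ast}}_{,2})^{2}\to H^{2}(1+H^{2})$ gives $\bigl(\lim_{t}h^{1^{\ast}}_{111}(p_{t})\bigr)^{2}=4(1+H^{2})/H^{2}\neq0$ --- the opposite of your claimed $\bar h^{1^{\ast}}_{11k}=\bar h^{1^{\ast}}_{22k}=0$. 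The contradiction in this branch therefore cannot be extracted from the second display in the form $8H^{4}+2H^{2}\le0$; in the paper it comes instead from the first equation of \eqref{3.1-24}, which (using $\bar h^{2^{\ast}}_{11}=0$ and the boundedness of $h^{2^{\ast}}_{222}$) forces $\lim_{t}h^{1^{\ast}}_{111}(p_{t})=0$, incompatible with the value above. So your outline is sound up to $\bar h^{2^{\ast}}_{11}=0$, but the concluding case analysis must be restructured along the paper's lines rather than copied from Proposition \ref{proposition 3.1}.
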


\begin{proof}
For $|\vec{H}|^{2}\neq0$, we can always choose a local frame field $\{e_{1}, e_{2}\}$
such that
$$ \vec{H}=H^{1^{\ast}}e_{1^{\ast}}, \ \ H^{1^{\ast}}=|\vec{H}|=H, \ \ H^{2^{\ast}}=h^{2^{\ast}}_{11}+h^{2^{\ast}}_{22}=0.
$$
Hence it is straightforward to see $$H^{2}\leq\frac{4}{3}S.$$
For $S$ being bounded from above, we know that the Bakry-Emery Ricci curvature of $x:M^{2}\rightarrow \mathbb{C}^{2}$ is bounded from below from the lemma \ref{lemma 2.5}. Applying the maximum principle
of Omori-Yau type concerning the operator $\mathcal{L}_{x^{\top}}$ to the function $-S$, we
see that there exists a sequence $\{p_{t}\} \subset M^{2}$ such that
\begin{equation*}
\lim_{t\rightarrow\infty} S(p_{t})=\inf S, \ \
\lim_{t\rightarrow\infty} |\nabla S|(p_{t})=0, \ \
\lim_{t\rightarrow\infty}\mathcal{L}_{x^{\top}} S(p_{t})\geq 0.
\end{equation*}
Since $|\vec{H}|^{2}=constant$, the lemma \ref{lemma 2.3} gives
\begin{equation}\label{3.1-17}
\sum_{i,p}(H^{p^{\ast}}_{,i})^{2}
=H^{2}+\sum_{i,j,p,q}H^{p^{\ast}}h^{p^{\ast}}_{ij}H^{q^{\ast}}h^{q^{\ast}}_{ij}
\end{equation}
and
\begin{equation}\label{3.1-18}
\frac{1}{2}\mathcal{L}_{x^{\top}} S=\sum_{i,j,k}(h^{p^{\ast}}_{ijk})^{2}
-\sum_{i,p}(H^{p^{\ast}}_{,i})^{2}-\frac{1}{2}(H^{2}-S)(H^{2}-3S-2).
\end{equation}
Thus, one can assume
\begin{equation*}
\lim_{t\rightarrow\infty}S(p_{t})=\inf S=\bar S, \ \ \lim_{t\rightarrow\infty}h^{p^{\ast}}_{ij}(p_{t})=\bar h^{p^{\ast}}_{ij}, \ \
\lim_{t\rightarrow\infty}H^{p^{\ast}}_{,i}(p_{t})=\bar H^{p^{\ast}}_{,i}, \ \ i, j, p=1, 2
\end{equation*}
since $S$ is bounded from above.
Without loss of the generality, we can
assume $h^{p^{\ast}}_{ij}(p_{t})\neq0$ for $i,j,p=1,2,3$. Unless otherwise specified, the following equations are considered at point $p_{t}\in M^{2}$.

What we want to do is prove $\inf S=0$. In the following text, we assume that $\inf S>0$.
Using the fact that $|\vec{H}|^{2}=constant$  and $|\nabla H^{2}|^{2}=4\sum_{k}(\sum_{p}H^{p^{\ast}}H^{p^{\ast}}_{,k})^{2}$, we obtain
we get that
\begin{equation}\label{3.1-19}
H^{1^{\ast}}_{,k}=H^{k^{\ast}}_{,1}=0, \ \ h^{1^{\ast}}_{11k}+h^{1^{\ast}}_{22k}=0, \ \ k=1, 2.
\end{equation}
By using the first formula of \eqref{2.1-12} and $h^{2^{\ast}}_{11}+h^{2^{\ast}}_{22}=0$, we obtain that
\begin{equation}\label{3.1-20}
H^{1^{\ast}}_{,1}=-h^{1^{\ast}}_{11}\langle x, e_{1} \rangle-h^{2^{\ast}}_{11}\langle x, e_{2} \rangle, \ \
H^{1^{\ast}}_{,2}=-h^{2^{\ast}}_{11}\langle x, e_{1} \rangle-h^{1^{\ast}}_{22}\langle x, e_{2} \rangle
\end{equation}
and
\begin{equation}\label{3.1-21}
H^{2^{\ast}}_{,1}=-h^{2^{\ast}}_{11}\langle x, e_{1} \rangle-h^{1^{\ast}}_{22}\langle x, e_{2} \rangle, \ \
H^{2^{\ast}}_{,2}=-h^{1^{\ast}}_{22}\langle x, e_{1} \rangle+h^{2^{\ast}}_{11}\langle x, e_{2} \rangle.
\end{equation}
Choosing $\nabla_{k}S=2a_{k}$ for $k=1,2$, \eqref{3.1-17}, \eqref{3.1-19}, \eqref{3.1-20} and $\lim_{t\rightarrow\infty} |\nabla S|(p_{t})=0$ can be written as

\begin{equation}\label{3.1-22}
(H^{2^{\ast}}_{,2})^{2}=H^{2}\big(1+\sum_{i,j}(h^{1^{\ast}}_{ij})^{2}\big),
\end{equation}

\begin{equation}\label{3.1-23}
\big(h^{1^{\ast}}_{11}h^{1^{\ast}}_{22}-( h^{2^{\ast}}_{11})^{2}\big)\langle x, e_{1} \rangle=0, \ \
\big(h^{1^{\ast}}_{11}h^{1^{\ast}}_{22}-( h^{2^{\ast}}_{11})^{2}\big)\langle x, e_{2} \rangle=0
\end{equation}
and
\begin{equation}\label{3.1-24}
\begin{aligned}
&\big((h^{1^{\ast}}_{11}-3h^{1^{\ast}}_{22})^{2}+12(h^{2^{\ast}}_{11})^{2}\big)h^{1^{\ast}}_{111}
+4(h^{2^{\ast}}_{11})^{2} h^{2^{\ast}}_{222}=b_{1}, \\
&\big((h^{1^{\ast}}_{11}-3h^{1^{\ast}}_{22})^{2}+12(h^{2^{\ast}}_{11})^{2}\big) h^{2^{\ast}}_{111}-h^{2^{\ast}}_{11}(h^{1^{\ast}}_{11}-3h^{1^{\ast}}_{22})h^{2^{\ast}}_{222}
=b_{2}.
\end{aligned}
\end{equation}
where $b_{1}=(h^{1^{\ast}}_{11}-3h^{1^{\ast}}_{22})a_{1}-4h^{2^{\ast}}_{11} a_{2}$,
$b_{2}=3h^{2^{\ast}}_{11} a_{1}+(h^{1^{\ast}}_{11}-3h^{1^{\ast}}_{22})a_{2}$ and
$\lim_{t\rightarrow\infty}b_{k}(p_{t})=0$ for $k=1,2$.

Using \eqref{3.1-17}--\eqref{3.1-24}, by making use of the same proof as in the proof of the Proposition \ref{proposition 3.1}, we can also obtain
\begin{equation}\label{3.1-25}
\bar h^{2^{\ast}}_{11}=0.
\end{equation}

\noindent
From  $|\vec{H}|^{2}=constant$ and \eqref{3.1-19}, we naturally obtain
\begin{equation}\label{3.1-26}
\sum_{p}H^{p^{\ast}}_{,1}H^{p^{\ast}}_{,1}
+H^{1^{\ast}}H^{1^{\ast}}_{,11}=0, \ \ H^{1^{\ast}}_{,11}=0.
\end{equation}
Besides, \eqref{3.1-17} yields that
\begin{equation*}
\sum_{i,p}H^{p^{\ast}}_{,i}H^{p^{\ast}}_{,ik}
=\sum_{i,j}(\sum_{p}H^{p^{\ast}}h^{p^{\ast}}_{ij})(\sum_{p}H^{p^{\ast}}_{,k}h^{p^{\ast}}_{ij}
+\sum_{p}H^{p^{\ast}}h^{p^{\ast}}_{ijk})
\end{equation*}
Hence by \eqref{3.1-19}, we obtain
\begin{equation}\label{3.1-27}
H^{2^{\ast}}_{,2}H^{2^{\ast}}_{,21}
=H^{2}\sum_{ij}h^{1^{\ast}}_{ij}h^{1^{\ast}}_{ij1}
=H^{2}\big((h^{1^{\ast}}_{11}-h^{1^{\ast}}_{22})h^{1^{\ast}}_{111}+2 h^{2^{\ast}}_{11}h^{2^{\ast}}_{111}\big)
\end{equation}
For $\nabla_{k}S=2a_{k}$ for $k=1,2$, by \eqref{3.1-19}, we know
\begin{equation}\label{3.1-28}
(h^{1^{\ast}}_{11}-3 h^{1^{\ast}}_{22})h^{1^{\ast}}_{111}+4 h^{2^{\ast}}_{11}h^{2^{\ast}}_{111}=a_{1}.
\end{equation}
Combining \eqref{3.1-27}, \eqref{3.1-28} and $H=h^{1^{\ast}}_{11}+h^{1^{\ast}}_{22}$, we concludes that
\begin{equation}\label{3.1-29}
H^{2^{\ast}}_{,2}H^{2^{\ast}}_{,21}
=\frac{1}{2}H^{2}(Hh^{1^{\ast}}_{111}+a_{1}).
\end{equation}
It follows from \eqref{2.1-12} that

\begin{equation*}
\begin{aligned}
&H^{1^{\ast}}_{,11}=-\sum_{k}h^{1^{\ast}}_{11k}\langle x, e_{k}\rangle-h^{1^{\ast}}_{11}-H\sum_{k}(h^{1^{\ast}}_{1k})^{2}, \\
&H^{2^{\ast}}_{,21}=-\sum_{k}h^{2^{\ast}}_{21k}
\langle x, e_{k}\rangle-h^{1^{\ast}}_{22}-H\sum_{k}h^{1^{\ast}}_{1k}h^{2^{\ast}}_{2k}.
\end{aligned}
\end{equation*}
Then by $H^{1^{\ast}}_{,11}=0$, we obtain
\begin{equation}\label{3.1-30}
H^{2^{\ast}}_{,21}=-H\big(1+\sum_{k}(h^{1^{\ast}}_{1k})^{2}
+\sum_{k}h^{1^{\ast}}_{1k}h^{2^{\ast}}_{2k}\big).
\end{equation}

Next, we will use $\bar h^{2^{\ast}}_{11}=0$ to complete the proof of the Proposition \ref{proposition 3.2}.

If $\bar h^{1^{\ast}}_{11}\bar h^{1^{\ast}}_{22}\neq0$, using \eqref{3.1-21} and \eqref{3.1-23}, we have
\begin{equation*}
\lim_{t\rightarrow\infty}\langle x, e_{1} \rangle(p_{t})
=\lim_{t\rightarrow\infty}\langle x, e_{2} \rangle(p_{t})=0, \ \ \lim_{t\rightarrow\infty}\big(\sum_{i,p}(H^{p^{\ast}}_{,i})^{2}\big)(p_{t})
=\lim_{t\rightarrow\infty}\big(H^{2^{\ast}}_{,2})^{2}(p_{t})=0.
\end{equation*}
Then \eqref{3.1-22} yields that
\begin{equation*}
H^{2}\big(1+\sum_{i,j}(\bar h^{1^{\ast}}_{ij})^{2}\big)=0
\end{equation*}
It contradicts the hypothesis.

\noindent
If $\bar h^{1^{\ast}}_{11}=0$, it is
easy to see that $\bar h^{1^{\ast}}_{22}\neq0$ since $|\vec{H}|^{2}\neq0$.
It follows from \eqref{3.1-19}, \eqref{3.1-22}, \eqref{3.1-29} and \eqref{3.1-30} that
\begin{equation}\label{3.1-31}
\lim_{t\rightarrow\infty}H^{2^{\ast}}_{,21}(p_{t})=-H, \ \ \big(\lim_{t\rightarrow\infty}h^{1^{\ast}}_{111}(p_{t})\big)^{2}
=\big(\lim_{t\rightarrow\infty}h^{1^{\ast}}_{221}(p_{t})\big)^{2}=\frac{4(1+H^{2})}{H^{2}},
\end{equation}
which implies
\begin{equation}\label{3.1-32}
|\lim_{t\rightarrow\infty}h^{2^{\ast}}_{222}(p_{t})|<\infty.
\end{equation}
By use of the first equation of \eqref{3.1-24}, \eqref{3.1-32} and $\bar h^{1^{\ast}}_{11}=\bar h^{2^{\ast}}_{11}=0$, we know
$$\lim_{t\rightarrow\infty}h^{1^{\ast}}_{111}(p_{t})=0,$$
which is in contradiction to \eqref{3.1-31}.

\noindent
If $\bar h^{1^{\ast}}_{22}=0$, one sees $\bar h^{1^{\ast}}_{11}\neq0$ since $|\vec{H}|^{2}\neq0$. \eqref{3.1-30} yields that
\begin{equation}\label{3.1-33}
\lim_{t\rightarrow\infty}H^{2^{\ast}}_{,21}(p_{t})=-H(1+H^{2}).
\end{equation}
It follows from \eqref{3.1-19}, \eqref{3.1-22}, \eqref{3.1-29} and \eqref{3.1-33} that
\begin{equation*}
\big(\lim_{t\rightarrow\infty}h^{1^{\ast}}_{111}(p_{t})\big)^{2}
=\big(\lim_{t\rightarrow\infty}h^{1^{\ast}}_{221}(p_{t})\big)^{2}
=\frac{4(1+H^{2})^{3}}{H^{2}}, \ \ |\lim_{t\rightarrow\infty}h^{2^{\ast}}_{222}(p_{t})|<\infty.
\end{equation*}
Then by the first equation of \eqref{3.1-24} and $\bar h^{1^{\ast}}_{22}=\bar h^{2^{\ast}}_{11}=0$, we know
$$\lim_{t\rightarrow\infty}h^{1^{\ast}}_{111}(p_{t})=0.$$
This also yields the contradiction. The proof of the Proposition \ref{proposition 3.2} is finished.

\end{proof}

\vskip3mm
\noindent
{\it Proof of Theorem \ref{theorem 1.3}}.
By the Proposition \ref{proposition 3.2}, we have $|\vec{H}|^{2}\equiv0$. Thus,
it follows from the definition of lagrangian self-expander and the first equation of \eqref{2.1-12} that
$$H^{p^{\ast}}=\langle x, e_{p^{\ast}}\rangle=0, \ \ \sum_{k}h^{p^{\ast}}_{ik}\langle x, e_{k}\rangle=0, \ \ i, p=1, 2.$$
That is,
$$h^{1^{\ast}}_{11}+h^{1^{\ast}}_{22}=0, \ \ h^{2^{\ast}}_{11}+h^{2^{\ast}}_{22}=0$$
and
\begin{equation*}
h^{1^{\ast}}_{11}\langle x, e_{1}\rangle+h^{1^{\ast}}_{12}\langle x, e_{2}\rangle=0, \ \
h^{1^{\ast}}_{21}\langle x, e_{1}\rangle+h^{1^{\ast}}_{22}\langle x, e_{2}\rangle=0.
\end{equation*}
Then by the symmetry of indices, we infer
$$\big((h^{1^{\ast}}_{11})^{2}+(h^{2^{\ast}}_{11})^{2}\big)\langle x, e_{k}\rangle=0, \ \ k=1,2.$$
Let us suppose $(h^{1^{\ast}}_{11})^{2}+(h^{2^{\ast}}_{11})^{2}\neq0$, we obtain
$$0=\langle x, e_{k}\rangle_{,k}=1+\sum_{p}h^{p^{\ast}}_{kk}\langle x, e_{p^{\ast}}\rangle=1.$$
It is impossible. Thus, $ h^{p^{\ast}}_{ik}=0 $ for $i,k,p=1,2$.
Clearly show that $x(M^{2})=\mathbb{R}^{2}$ through the origin.

\begin{flushright}
$\square$
\end{flushright}

\noindent{\bf Acknowledgements.}
The first author was partially supported by Natural Science Foundation of Henan Province Grant No.242300421686. The second author was partly supported by grant No.12171164 of NSFC, GDUPS (2018), Guangdong Natural Science Foundation Grant No.2023A1515010510.


\begin{thebibliography}{99}
\bibitem{AW}
S. J. Altschuler and L. F. Wu, Translating surfaces of the non-parametric mean curvature flow with prescribed contact angle,
Calc. Var. Partial Differential Equations, {\bf 2} (1994), no. 1, 101-111.


\bibitem{AC}
S. Ancari and X. Cheng, Volume properties and rigidity on self-expanders of mean
curvature flow, Geom. Dedicata, {\bf 216} (2022), no. 2, Paper No. 24, 25 pp.

\bibitem{Anc}
H. Anciaux, Construction of Lagrangian self-similar solutions to the mean curvature
flow in $\mathbb{C}^{n}$, Geom. Dedicata, {\bf 120} (2006), 37-48.


\bibitem{Cas}
I. Castro and A. M. Lerma,
Hamiltonian stationary self-similar solutions for Lagrangian mean curvature flow in the complex Euclidean plane, Proc. Amer. Math. Soc., {\bf 138} (2010), no. 5, 1821-1832.

\bibitem{Cas1}
I. Castro, A. M. Lerma, The Clifford torus as a self-shrinker for the Lagrangian mean curvature flow, Int. Math. Res. Not., {\bf 6} (2014), 1515-1527.

\bibitem{CQ}
Q. Chen and H. Qiu, Rigidity of self-shrinkers and translating solitons of mean curvature flows,
Adv. Math., {\bf 294} (2016), 517-531.

\bibitem{CHW}
 Q. -M. Cheng, H. Hori and G. Wei, Complete Lagrangian self-shrinkers in $\mathbb{R}^{4}$, Math. Z., {\bf 301} (2022), no. 4, 3417-3468.

\bibitem{CZ}
X. Cheng and D. Zhou, Spectral properties and rigidity for self-expanding solutions of the mean curvature flows, Math. Ann., {\bf 371} (2018), no. 1-2, 371-389.

\bibitem{CSS}
J. Clutterbuck, O. Schn\"{u}rer and F. Schulze, Stability of translating solutions to mean curvature flow, Calc. Var. Partial Differential Equations, {\bf 29} (2007), no. 3, 281-293.

\bibitem{GSSZ}
K. Groh, M. Schwarz, K. Smoczyk and K. Zehmisch, Mean curvature flow of monotone Lagrangian submanifolds, Math. Z., {\bf 257} (2007), no. 2, 295-327.

\bibitem{H}
P. H. Halldorsson, Self-similar solutions to the curve shortening flow, Trans. Amer. Math. Soc., {\bf 364} (2012), 5285-5309.

\bibitem{H1}
H. P. Halldorsson, Helicoidal surfaces rotating/translating under the mean curvature flow,
Geom. Dedicata, {\bf 162} (2013), 45-65.

\bibitem{Hoff}
D. A. Hoffman, Surfaces of constant mean curvature in manifolds of constant curvature, J. Differential Geom., {\bf 8}(1973), 161-176.

\bibitem{IJS}
Y. Imagi, D. Joyce and J.O. dos Santos, Uniqueness results for special Lagrangians and Lagrangian mean curvature flow expanders in $\mathbb{C}^{m}$, Duke Math. J., {\bf 165} (2016), no. 5, 847-933.

\bibitem{Ish}
N. Ishimura, Curvature evolution of plane curves with prescribed opening angle, Bull. Austral. Math. Soc., {\bf 52} (1995), 287-296.


\bibitem{JLT}
D. Joyce, Y. -I. Lee and M. -P. Tsui, Self-similar solutions and translating solitons for Lagrangian mean curvature flow, J. Differential Geom., {\bf 84} (2010), 127-161.

\bibitem{LW}
Y. -I. Lee and M. -T. Wang, Hamiltonian stationary shrinkers and expanders for Lagrangian mean curvature flows, J. Differential Geom., {\bf 83} (2009), 27-42.

\bibitem{LW1}
Y. -I. Lee and M. -T. Wang, Hamiltonian stationary cones and self-similar solutions in higher
dimension, Trans. Amer. Math. Soc., {\bf 362} (2010), 1491-1503.

\bibitem{LV}
H. Li and L. Vrancken, A basic inequality and new characterization of Whitney spheres in a complex space form, Israel J. Math., {\bf 146} (2005), 223-242.

\bibitem{LW2}
H. Li and X. Wang, A differentiable sphere theorem for compact Lagrangian submanifolds in complex Euclidean space and complex projective space, Commun. Anal. Geom., {\bf 22} (2014), no. 2, 269-288.

\bibitem{LW3}
H. Li and X. Wang,
New characterizations of the Clifford torus as a Lagrangian self-shrinker,
J. Geom. Anal., {\bf 27} (2017), no. 2, 1393-1412.

\bibitem{LLQ}
X. X. Li, Y. Y. Liu and R. N. Qiao,
A uniqueness theorem of complete Lagrangian translator in $\mathbb{C}^{2}$,
Manuscripta Math., {\bf 164} (2021), no. 1-2, 251-265.

\bibitem{LW4}
Z. Li and G. Wei, Complete Lagrangian self-expanders in $\mathbb{C}^{2}$, J. Geom. Phys., {\bf 198} (2024), Paper No. 105107, 10 pp.

\bibitem{LWW}
Z. Li, R. X. Wang and G. Wei,
The rigidity theorem for complete Lagrangian self-shrinkers, Preprint, 2024.

\bibitem{M}
F. Morgan, Manifolds with density, Notices Amer. Math. Soc., {\bf 52} (2005), no. 8, 853-858.


\bibitem{Nak}
H. Nakahara, Some examples of self-similar solutions and translating solitons for Lagrangian mean curvature flow, Tohoku Math. J., {\bf 65} (2013), no. 3, 411-425.

\bibitem{Nev}
A. Neves, Singularities of Lagrangian mean curvature flow: zero-Maslov class case, Invent.
Math., {\bf 168} (2007), no. 3,  449-484.

\bibitem{NT}
A. Neves and G. Tian, Translating solutions to Lagrangian mean curvature flow, Trans. Amer. Math. Soc., {\bf 365} (2013), no. 11, 5655-5680.

\bibitem{Pyo}
J. Pyo, Compact translating solitons with non-empty planar boundary,
Differential Geom. Appl., {\bf 47} (2016), 79-85.

\bibitem{Smo}
K. Smoczyk, Self-expanders of the mean curvature flow, Vietnam J. Math., {\bf 49} (2021), no. 2, 433-445.

\bibitem{Wang}
X. -J. Wang, Convex solutions to the mean curvature flow, Ann. of Math., {\bf 173} (2011), no. 3, 1185-1239.

\bibitem{White}
B. White, Subsequent singularities in mean-convex mean curvature flow, Calc. Var. Partial Differential Equations, {\bf 54} (2015), no. 2, 1457-1468.


\end{thebibliography}
\end{document}